\title{Regular Cayley maps on dihedral groups with the smallest kernel}
\author{Istv\'an Kov\'acs,$^{a}$   \; Young Soo Kwon$^{\, b}$ \\  [+0.75ex]
$^a$ {\small IAM, University of Primorska, Muzejski trg 2, 6000 Koper, Slovenia} \\ [-0.5ex]
$^b$ {\small Department of Mathematics, Yeungnam University, Kyongsan 712-749, Republic of Korea}
}
\date{}
\newtheorem{thm}{Theorem}[section]
\newtheorem{lem}[thm]{Lemma}
\newtheorem{cor}[thm]{Corollary}
\theoremstyle{definition}
\theoremstyle{remark}
\newtheorem{rem}[thm]{Remark}
\def\D{\mathbb{D}}
\def\M{\mathcal{M}}
\def\proof{\noindent{\bf Proof.}\ }
\def\QED{\hfill $\square$ \medskip }
\def\Z{\mathbb{Z}}
\DeclareMathOperator{\aut}{Aut}
\DeclareMathOperator{\cay}{Cay}
\DeclareMathOperator{\cm}{CM}
\DeclareMathOperator{\core}{Core}
\DeclareMathOperator{\cosg}{Cos}
\DeclareMathOperator{\kr}{Ker}
\DeclareMathOperator{\sym}{Sym}
\newcommand{\comment}[1]{}
\begin{document}

\maketitle
\let\thefootnote\relax\footnote{
The work was partially supported by the Slovenian-Korean bilateral
project, grant no.\ BI-KOR/13-14-002. The first author was also
supported by the ARRS grant no.\ P1-0285. The second author was
supported by the 2014 Yeungnam University Research Grant.  \\
[+0.5ex] {\it  E-mail addresses:} istvan.kovacs@upr.si (Istv\'an
Kov\'acs), ysookwon@ynu.ac.kr (Young Soo Kwon). }

\begin{abstract}
Let $\M=\cm(D_n,X,p)$ be a regular Cayley map on the dihedral group
$D_n$ of order $2n, n \ge 2,$ and let $\pi$ be the power function
associated with $\M$. In this paper it is shown that the kernel
$\kr(\pi)$ of the power function $\pi$ is a dihedral subgroup of
$D_n$ and  if $n \ne 3,$ then the kernel $\kr(\pi)$ is of order at
least $4$. Moreover,  all $\M$ are classified for which $\kr(\pi)$
is of order $4$. In particular, besides $4$ sporadic maps on $4,4,8$
and $12$ vertices respectively, two infinite families of
non-$t$-balanced  Cayley maps on $D_n$ are obtained.

\medskip\noindent{\it Keywords:} regular map, regular Cayley map,
skew-morphism, dihedral group.

\medskip\noindent{\it MSC 2010:}  05C10, 05C30.
\end{abstract}

\section{Introduction}

In this paper all groups are finite, and all graphs are finite,
simple and connected. For a graph $\Gamma$, we let $V(\Gamma)$,
$E(\Gamma),$ $D(\Gamma),$ and $\aut(\Gamma)$ denote the vertex set,
the edge set, the dart (or arc) set, and the full group of
automorphisms of $\Gamma$, respectively. By a \emph{map} with an
underlying graph $\Gamma$ we mean a triple $\M=(\Gamma;R,T),$ where
$R$ is a permutation of the dart set $D(\Gamma)$ whose orbits
coincide with the  sets of darts initiating in the same vertex, and
$T$ is an involution of $D(\Gamma)$ whose orbits coincide with sets
of darts with the same underlying edge. The permutations $R$ and $T$
are called the \emph{rotation} and the \emph{dart-reversing
involution} of $\M$, respectively. Given two maps
$\M_i=(\Gamma_i;R_i,T_i),$ $i=1,2,$ an \emph{isomorphism} $\Phi :
\M_1 \to \M_2$ is a bijection $\Phi : D(\Gamma_1) \to  D(\Gamma_2)$
such that $\Phi R_1 =  R_2 \Phi$ and $\Phi T_1 = T_2 \Phi$.  In
particular, if $\M_1 = \M_2 = \M,$ then $\Phi$ is called an
\emph{automorphism}, and the group of all automorphisms of $\M$ will
be denoted by $\aut(\M)$. It is easily seen that $\aut(\M)$ acts
semi-regularly on the dart set $D(\Gamma),$ and in the case when
this action is also transitive the map $\M$ is called
\emph{regular}. In what follows the map $(\Gamma;R,T)$ will be
written as the pair $(\Gamma;R)$ because $T$ is uniquely defined by
$\Gamma$. For more information on regular maps we refer the reader
to the survey paper \cite{N01}.  \medskip

Let $G$ be a group and let $X$ be a generating set of $G$ such that
$X = X^{-1}$ and $1_G \notin X,$ where $1_G$ denotes the identity of
$G$. The \emph{Cayley graph} $\cay(G,X)$ is the graph with vertex
set $G$ and with edges in the form $\{g,g x\}, g \in G, x \in X$.
The \emph{left multiplication} $L_g$ induced by $g \in G$ is the
permutation of $G$ defined by $L_g(h) =gh$ for any $h \in G$. We set
$L(G) = \{L_g : g \in G\}$. It is clear that $L(G) \le
\aut(\cay(G,S))$. Let $p$ be a cyclic permutation of $X$. The
\emph{Cayley map} $\cm(G,X,p)$ is the map $(\Gamma;R)$ with
underlying graph $\Gamma = \cay(G,X)$ and rotation $R$ defined by $R
: (g,g x) \mapsto (g,g p(x)), g \in G, x \in X$. It can be easily
checked that for every $g \in G,$ $L_g R = R L_g$ and $L_g T =  T
L_g,$ so $L(G)$ is also a subgroup of $\aut(\M)$ acting regularly on
the vertex set. Two Cayley maps $\M_i = \cm(G_i,X_i,p_i), i=1,2,$
are called \emph{equivalent}, denoted by $\M_1 \equiv \M_2,$ if
there exists a group isomorphism $\phi : G_1 \to G_2$  mapping $X_1$
to $X_2$ such that $\phi p_1 = p_2 \phi$. Equivalent Cayley maps are
isomorphic as maps. The converse, however, does not hold in general,
i.e., there exist isomorphic Cayley maps which are not equivalent.

The class of cyclic groups is the only class of finite groups on which all regular Cayley maps have been classified due to the work of
Conder and Tucker \cite{CT14}. Regarding other groups, only partial classifications are
known (see, e.g. \cite{KMM13,KKF06,KO08,O09,WF05,Z14,Z}).
For more information on regular Cayley maps, the reader is referred to \cite{B72,JS02,RSJTW05,SS92,SS94}. \medskip

In this paper we focus on regular Cayley maps on dihedral groups.
The dihedral group of order $2n$ for $n \ge 2$ will be denoted by
$D_n$.  A complete classification of regular Cayley maps on dihedral
groups have been given in \cite{WF05} for balanced maps; in
\cite{KKF06} for $t$-balanced maps with $t > 1;$ in \cite{KMM13} for
non-balanced maps with $n$ being an odd number; and in \cite{Z} for
maps of skew-type $3$. Recall that a Cayley map $\M = \cm(G,X,p)$ is
\emph{$t$-balanced} if $p(x)^{-1} = p^t(x^{-1})$ for every $x \in
X$. In particular, if $t=1$ then $\M$ is called \emph{balanced}, and
if $t=-1,$ then $\M$ is called \emph{anti-balanced}. Let $\pi$ be
the power function associated with a regular Cayley map $\M =
\cm(G,X,p)$  (for the definition of $\pi,$ see 2.1). Let $\kr(\pi) =
\{g \in G : \pi(g)=1\}$ be the kernel of the power function $\pi$.
Following \cite{Z14}, we also say that $\M$  is of \emph{skew-type
$k$ when $|G : \kr(\pi)|=k$.} If $\M$ is $t$-balanced, then it was
proved to be of skew-type at most $2$ in \cite{CJT07,SS92}. More
precisely, $t=1$  holds if and only if $\kr(\pi) =G$ (see
\cite{SS92}); and if $t > 1$ and $G=D_n,$ then $\kr(\pi)$ is a
dihedral subgroup of $D_n$ of index $2$. In this context, the papers
\cite{KKF06,WF05,Z} deal with regular Cayley maps $\M =
\cm(D_n,X,p)$ having a large kernel.

In this paper we consider the other extreme case, i.e., the
associated kernel is as small as possible. We are going to prove
that, if $\M = \cm(D_n,X,p)$ is a regular Cayley map with associated
power function $\pi,$ then either $\M$ is the embedding of the
octahedron into the sphere and $|\kr(\pi)| = 2,$ or $|\kr(\pi)| \ge
4$ (see Theorem~\ref{min}). Moreover, we are also  going to
determine those maps $\M$ for which $|\kr(\pi)| = 4$. In this paper
we set $D_n = \langle a,b \mid a^n=b^2=baba=1 \rangle$ and
$C_n=\langle a \rangle$. Note that if $n
>  2,$ then $C_n$ is the unique cyclic subgroup of $D_n$ of order
$n$.

\begin{thm}\label{main}
Let $\M$ be a regular Cayley map on $D_n$ such that $|\kr(\pi)| = 4$
for the associated power function $\pi$. Then exactly one of the
 following holds:
\begin{enumerate}[(1)]
\item $n=2,$ and $\M \equiv \cm(D_2,\{a,b\},(a,b))$ or
$\cm(D_2,\{a,b,ab\},(a,b,ab))$.
\item $n=4,$ and $\M \equiv \cm(D_4,\{a,a^{-1},b\},(a,b,a^{-1}))$.
\item $n=6,$ and $\M \equiv \cm(D_6,\{a,a^{-1},ab,a^{-1}b\},(a,a^{-1},ab,a^{-1}b)$.
\item $n=2m,$ $n \ge 6,$
$\M \equiv \cm(D_n,a \langle a^2 \rangle \, \cup \, b \langle a^2 \rangle, p)$ with
$$
p = (b,a,a^2 b,a^3,a^4 b,\ldots,a^{n-2}b,a^{n-1}).
$$
\item $n=2m,$ $8 \mid n,$ $\M \equiv \cm(D_n,a \langle a^2 \rangle \, \cup \, b \langle a^2 \rangle, p)$ with
$$
p = (b,a,a^{m+2}b,a^3,a^4 b,\ldots,a^{m-2}b,a^{n-1}).
$$
\end{enumerate}
\end{thm}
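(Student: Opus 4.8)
The plan is to recast the problem in the language of skew-morphisms and then to reduce it to a finite system of congruences. Recall that the regular Cayley map $\M=\cm(D_n,X,p)$ is determined by a skew-morphism $\varphi$ of $D_n$ whose restriction to the generating orbit $X$ equals $p$, that $\pi$ is the power function of $\varphi$, and that $\varphi$ restricts to an \emph{automorphism} of the subgroup $K:=\kr(\pi)$. I would begin by exploiting the hypothesis $|K|=4$ together with Theorem~\ref{min}, which guarantees that $K$ is dihedral; a dihedral group of order $4$ is the Klein four-group, so $4\mid 2n$ and hence $n$ is even, say $n=2m$. A Klein four-subgroup of $D_n$ cannot consist of three reflections, since the product of two reflections is a rotation, so $K$ must contain the central involution $a^{m}$, giving $K=\{1,a^{m},a^{j}b,a^{j+m}b\}$ for some $j$. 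Because equivalence of Cayley maps permits applying an automorphism of $D_n$, I may take $b\mapsto a^{-j}b$ to normalise $j=0$, so that $K=\sg{a^{m},b}$.

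Next I would pin down the action of $\varphi$. By the definition of skew-type recalled in the introduction, $\M$ has skew-type $|D_n:K|=m$. Combining $\varphi|_K\in\aut(K)$ (an element of the $S_3$ permuting the involutions $a^m,b,a^mb$) with the facts that $X$ is a single $\varphi$-orbit which generates $D_n$ and is closed under inversion, I would determine how $\varphi$ acts on $C_n=\sg a$ and on the set of reflections, reducing $\varphi$ to a small number of integer parameters modulo $n$. Evaluating the defining identity $\varphi(gh)=\varphi(g)\varphi^{\pi(g)}(h)$ on the generators $a$ and $b$, and propagating it along the orbit $X$, then yields a finite system of congruences modulo $n$ for these parameters and for the successive values of $\pi$ along $X$.

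The heart of the argument is to solve this system under the exact requirement $|K|=4$. I expect the generic solution to force $X=a\sg{a^2}\cup b\sg{a^2}$ with $p$ alternating between odd rotations and even reflections, producing family~(4); a second branch, in which one reflection is shifted by $a^{m}$, produces family~(5) but is consistent only when a further congruence holds, and it is in tracing this congruence — together with the demands that the shifted orbit remain inverse-closed and keep the kernel of order exactly $4$ — that the divisibility $8\mid n$ must appear. The sporadic maps~(1)--(3) should emerge as the degenerate small-$n$ solutions, where the two families collapse, coincide, or are replaced by generating orbits of smaller valency (for $n=2,4,6$); for each surviving candidate $(X,p)$ I would then verify directly that it extends to a genuine skew-morphism of $D_n$ with $|\kr(\pi)|=4$, thereby confirming regularity.

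I anticipate that the main obstacle is the case analysis establishing exhaustiveness: isolating precisely when the exceptional family~(5) exists, ruling out spurious solutions of the congruence system, and simultaneously checking inverse-closedness and generation in every branch. A secondary and more routine task is the equivalence classification — proving that the listed maps are pairwise inequivalent and that every solution is equivalent to one of them — which should follow by comparing the normalised data $(K,X,p)$ under the action of $\aut(D_n)$.
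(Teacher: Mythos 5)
Your plan hinges on the recalled ``fact'' that the associated skew-morphism $\psi$ restricts to an automorphism of $K=\kr(\pi)$, and this is false --- it fails for the very maps you are asked to classify. What is true is only that $\pi(g)=1$ forces $\psi(gh)=\psi(g)\psi(h)$ for all $h$, so $\psi|_K$ is an injective homomorphism of $K$ \emph{into} $D_n$; its image need not be $K$. Concretely, for family (4) one has $\kr(\pi)=\{1,a^m,ba,ba^{m+1}\}$ and $\psi(ba)=\psi(a^{n-1}b)=ab$, which lies outside $\kr(\pi)$ for every $n\ge 6$ (likewise $\psi(a^m)=a^{m+1}b\notin\kr(\pi)$ when $m$ is odd). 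Consequently the constraint ``$\psi|_K$ is an element of the $S_3$ permuting the involutions of $K$'', which is what you intend to use to pin down $\psi$ on $C_n$ and on the reflections, is simply not available. (Your normalisation of $K$ up to equivalence, and the observation that $K$ must contain the central involution $a^m$, are both fine.)

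The second, larger gap is that the heart of the theorem --- proving that $X$ must equal $a\langle a^2\rangle\cup b\langle a^2\rangle$ and that $\psi$ is one of two explicit permutations --- is exactly the step you defer to ``a finite system of congruences'', and nothing in your outline bounds that system: a priori neither the valency $|X|$ nor the way $\psi$ interleaves rotations with reflections is controlled, so the number of unknown values of $\pi$ along the orbit grows with $n$. The paper closes this gap with two tools for which you would need substitutes: (i) the classification of arc-regular dihedrants whose cyclic subgroup of order $n$ is core-free (Theorem~\ref{KMM}), fed through the admissible-quadruple reconstruction of Section~3 to settle the core-free case (Theorem~\ref{c-free}); and (ii) when the core $N$ of $L(C_n)$ is non-trivial, a reduction to the quotient map $\M/T$ by the central involution $T=\langle a^m\rangle$ combined with induction on $n$ (Lemmas~\ref{lem1} and~\ref{lem2} transfer $|\kr(\pi)|=4$, the shape of $X$, and the parity of the $\pi$-values between $\M$ and $\M/T$). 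Only after that does the argument become the finite case analysis you anticipate --- lifting $p^{D_n/T}$ back to $p$ via $\psi(gh)=\psi(g)\psi^{\pi(g)}(h)$ --- and that is where $8\mid n$ for family (5) emerges. Without analogues of (i) and (ii), your plan does not establish exhaustiveness, which you yourself identify as the main obstacle but leave unresolved.
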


\section{Preliminaries}

In this section we collect all concepts and results  needed in this paper.
\paragraph{2.1  Skew-morphisms of finite groups.}
For a finite group $G,$ let $\psi : G \to G$ be a permutation of
the underlying set $G$ of order $r$ (in the full symmetric group $\sym(G)$) and let $\pi : G \to \{1,\ldots,r\}$
be any function. The permutation $\psi$ is a \emph{skew-morphism} of $G$
with \emph{power function} $\pi$ if $\psi(1_G)=1_G,$ and $\psi(g h) = \psi(g) \psi^{\pi(g)}(h)$ for all $g,h \in G$.
Skew-morphisms were defined by Jajcay and \v Sir\'a\v n in \cite{JS02}, where the following theorem was shown:

\begin{thm}\label{JS}
A Cayley map $\M = \cm(G,X,p)$ is regular if and only if there
exists a skew-morphism $\psi$ of $G$ such that $\psi(x) = p(x)$ for
all $x\in X$.
\end{thm}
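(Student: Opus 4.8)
The plan is to set up a dictionary between map automorphisms fixing the base vertex $1_G$ and skew-morphisms of $G$, and then prove each implication by translating across it. I identify a dart $(g,gx)$ with the pair consisting of its tail $g$ and its direction $x\in X$; in these terms the defining actions read $R\colon(g,gx)\mapsto(g,gp(x))$, $T\colon(g,gx)\mapsto(gx,g)$, and $L_h\colon(g,gx)\mapsto(hg,hgx)$. Two facts from the excerpt drive everything: $\aut(\M)$ is semi-regular on $D(\Gamma)$, so \emph{$\M$ is regular if and only if $\aut(\M)$ is transitive on darts}; and since $L(G)$ is already transitive on vertices, this transitivity is equivalent to the vertex-stabiliser $H:=\aut(\M)_{1_G}$ acting transitively, hence regularly, on the star $\{(1_G,x):x\in X\}$. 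I will also use that a dart is determined by its tail and head, so any map automorphism is recovered from the permutation it induces on vertices.

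For the forward implication, assume $\M$ is regular. Fix $x_0\in X$ and let $\Psi\in H$ be the unique automorphism carrying the base dart $(1_G,x_0)$ to $R(1_G,x_0)=(1_G,p(x_0))$. Because $\Psi$ commutes with $R$ and the star is a single $R$-cycle, $\Psi$ acts on the star exactly as $p$, so it is a full $|X|$-cycle there and therefore generates $H$, which is cyclic of order $|X|$. Let $\psi$ be the vertex permutation induced by $\Psi$; then $\psi(1_G)=1_G$ and $\psi(x)=p(x)$ for all $x\in X$. To extract the skew-morphism law I conjugate: for each $g\in G$ the automorphism $L_{\psi(g)}^{-1}\Psi L_g$ fixes $1_G$, hence equals $\Psi^{\pi(g)}$ for some exponent $\pi(g)$, that is $\Psi L_g=L_{\psi(g)}\Psi^{\pi(g)}$; reading off the induced vertex action at an arbitrary $h$ gives $\psi(gh)=\psi(g)\,\psi^{\pi(g)}(h)$. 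Thus $\psi$ is a skew-morphism with power function $\pi$ extending $p$.

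For the converse, given a skew-morphism $\psi$ with $\psi|_X=p$, I define a candidate rotary automorphism by applying $\psi$ to both ends of each dart, $\Psi(g,gx):=(\psi(g),\psi(gx))$. Commutation with $T$ is then immediate, since $T$ merely swaps the two ends of a dart and $\Psi$ treats them symmetrically, while $\Psi R=R\Psi$ is a one-line check using that $\pi(g)$ depends only on the tail $g$. The step I expect to be the real obstacle is well-definedness, namely that $\Psi$ sends darts to darts: this requires the direction $\psi(g)^{-1}\psi(gx)$ to lie in $X$, and here the skew-morphism equation gives $\psi(g)^{-1}\psi(gx)=\psi^{\pi(g)}(x)=p^{\pi(g)}(x)$, which lies in $X$ precisely because $\psi|_X=p$ and $p$ permutes $X$. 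Granting well-definedness, $\Psi$ is a bijection (as $\psi$ is), so $\Psi\in\aut(\M)$; it fixes $1_G$ and sends $(1_G,x)$ to $(1_G,p(x))$, whence $\langle L(G),\Psi\rangle$ is transitive on darts and $\M$ is regular.
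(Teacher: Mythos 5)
Your proof is correct, but there is no in-paper proof to match it against: the paper quotes Theorem~\ref{JS} from Jajcay and \v Sir\'a\v n \cite{JS02} without proof. The closest the paper comes is Section~3, and the comparison is instructive. Your forward direction runs on the same algebraic engine as the paper's proof of Lemma~\ref{aq1}: the relation $\Psi L_g = L_{\psi(g)}\Psi^{\pi(g)}$, obtained because $L_{\psi(g)}^{-1}\Psi L_g$ stabilizes $1_G$ and the stabilizer is the cyclic group $\langle\Psi\rangle$, then evaluated at a point to yield $\psi(gh)=\psi(g)\psi^{\pi(g)}(h)$; Remark~\ref{rem1} isolates exactly this mechanism (the condition $\varphi L(H)\subseteq L(H)\langle\varphi\rangle$ suffices). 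Where you genuinely diverge is the converse: the paper never builds a rotary dart automorphism from a skew-morphism, because its framework (Lemmas~\ref{aq1} and~\ref{aq2}) encodes regular Cayley maps as admissible quadruples $(G,H,x,y)$ acting on coset graphs, where dart-regularity comes for free from the group action. Your explicit $\Psi(g,gx)=(\psi(g),\psi(gx))$ is more elementary and self-contained, and you correctly identified well-definedness as the crux: since $\psi|_X=p$ and $p$ permutes $X$, one gets $\psi^k|_X=p^k$ for all $k\ge 0$, whence $\psi(g)^{-1}\psi(gx)=\psi^{\pi(g)}(x)=p^{\pi(g)}(x)\in X$, and the same identity makes $\Psi R=R\Psi$ a one-line check exactly as you say.

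Two small points deserve to be written out rather than left implicit. First, your dictionary between map automorphisms and their induced vertex permutations is legitimate precisely because the graph is simple: a dart is determined by its tail and head, and $\Phi T=T\Phi$ shows heads map to heads, so $\Phi(g,h)=(\psi(g),\psi(h))$; this faithfulness is also what guarantees that the order of the vertex permutation $\psi$ equals the order of $\Psi$, which you need so that $\pi$ takes values modulo the order $r$ of $\psi$, as the definition of a power function requires. Second, in the converse, regularity follows from transitivity of $\langle L(G),\Psi\rangle$ together with the automatic semi-regularity of $\aut(\M)$ on darts, which is exactly how the paper's definition of a regular map is phrased; it is worth one sentence to close that loop.
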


The skew-morphism $\psi$ and its power function $\pi$ in the above theorem are
uniquely determined by the regular Cayley map $\M$.
In what follows these will be referred to as the \emph{skew-morphism (power function) associated with} $\M$.
More precisely, for a given regular Cayley map $\cm(G,X,p),$ the associated skew-morphism $\psi$ is of order $|X|,$
and the distribution of the values of $\pi$ on $X$ is given by
the following formula
(see \cite{JS02}):
\begin{equation}\label{eq1}
\pi(x) \equiv \chi(\psi(x)) - \chi(x) + 1(\text{mod }|X|) \text{ for any } x \in X,
\end{equation}
where $\chi(x)$ is the smallest non-negative integer such that $p^{\chi(x)}(x) = x^{-1}$ (notice that $x^{-1} \in X$ as
$X = X^{-1}$ holds).  \medskip

The \emph{kernel} of the power function $\pi$ is defined by
$\kr(\pi) = \{g \in G : \pi(g) = 1\}$. The following lemma shows
some basic properties (see \cite{JS02}):

\begin{lem}\label{JS-lem}
Let $\psi$ be a skew morphism of $G$ and let $\pi$ be
the corresponding power function of $\psi$.
\begin{enumerate}[(1)]
\item $\kr(\pi)$ is a subgroup of $G$.
\item $\pi(g) = \pi(h)$ if and only if $g$ and $h$ belong to the same right coset of $\kr(\pi)$.
\item $\pi(gh) \equiv \displaystyle{\sum_{i=0}^{\pi(g)-1} \pi(\psi^i(h))}
(\text{mod }r),$ where $r$ is the order of $\psi$.
\end{enumerate}
\end{lem}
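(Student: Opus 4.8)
The plan is to prove the three assertions in the order (3), (1), (2), since the cocycle-type formula (3) is the engine that drives the other two. The first step is to establish, by induction on $n$, the generalized skew-morphism identity
\[
\psi^n(gh) = \psi^n(g)\,\psi^{\sigma_n(g)}(h), \qquad \sigma_n(g) := \sum_{i=0}^{n-1}\pi(\psi^i(g)),
\]
for all $n \ge 0$ and all $g,h \in G$. The base case is the defining relation $\psi(gh)=\psi(g)\psi^{\pi(g)}(h)$, and in the inductive step I would apply $\psi$ once more to $\psi^n(gh)=\psi^n(g)\,\psi^{\sigma_n(g)}(h)$ and invoke the defining relation with left factor $\psi^n(g)$; the two exponents then add up to exactly $\sigma_{n+1}(g)=\pi(\psi^n(g))+\sigma_n(g)$.

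With this identity available, (3) follows by evaluating $\psi$ on a triple product in two ways. Expanding $\psi((gh)k)$ gives $\psi(gh)\,\psi^{\pi(gh)}(k)$, while expanding $\psi(g(hk))$ and applying the generalized identity to $\psi^{\pi(g)}(hk)$ gives $\psi(g)\,\psi^{\pi(g)}(h)\,\psi^{\sigma_{\pi(g)}(h)}(k) = \psi(gh)\,\psi^{\sigma_{\pi(g)}(h)}(k)$. Cancelling the common left factor $\psi(gh)$ and using that $k$ is arbitrary yields $\psi^{\pi(gh)}=\psi^{\sigma_{\pi(g)}(h)}$ as permutations, hence $\pi(gh)\equiv \sum_{i=0}^{\pi(g)-1}\pi(\psi^i(h)) \pmod r$, which is (3).

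For (1), I would first record $\pi(1_G)=1$: the relation $\psi(h)=\psi(1_G h)=\psi^{\pi(1_G)}(h)$ forces $\psi^{\pi(1_G)}=\psi$, so $\pi(1_G)\equiv 1$ and thus $\pi(1_G)=1$, i.e. $1_G\in\kr(\pi)$. If $g,h\in\kr(\pi)$ then $\pi(g)=1$, so the sum in (3) collapses to its single term $\pi(gh)\equiv\pi(\psi^0(h))=\pi(h)=1$, giving $gh\in\kr(\pi)$; since $G$ is finite, a nonempty subset closed under multiplication is a subgroup. The same computation proves the easy half of (2): for $k\in\kr(\pi)$ and any $h$ one gets $\pi(kh)=\pi(h)$, so if $g$ and $h$ lie in a common right coset, say $g=kh$ with $k=gh^{-1}\in\kr(\pi)$, then $\pi(g)=\pi(h)$.

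The main obstacle is the converse direction of (2), and here I would argue straight from the defining relation rather than from (3). Assuming $\pi(g)=\pi(h)=n$, the relations $\psi(gx)=\psi(g)\psi^n(x)$ and $\psi(hx)=\psi(h)\psi^n(x)$ give $\psi(g)^{-1}\psi(gx)=\psi(h)^{-1}\psi(hx)$ for all $x$. Substituting $x=h^{-1}u$ and simplifying the right-hand side yields the key identity $\psi(gh^{-1}u)=\psi(g)\psi(h)^{-1}\psi(u)$ for all $u\in G$. Setting $u=1_G$ shows $\psi(gh^{-1})=\psi(g)\psi(h)^{-1}$, and feeding this back while expanding the left-hand side as $\psi(gh^{-1})\,\psi^{\pi(gh^{-1})}(u)$ lets me cancel the common factor $\psi(g)\psi(h)^{-1}$ to conclude $\psi^{\pi(gh^{-1})}=\psi$. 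Hence $\pi(gh^{-1})\equiv 1\pmod r$, so $gh^{-1}\in\kr(\pi)$ and $g,h$ share a right coset. The delicate point throughout is to keep the non-commutative order of factors exactly right, since it is precisely the asymmetry between left and right multiplication that makes the \emph{right} cosets of $\kr(\pi)$ the level sets of $\pi$.
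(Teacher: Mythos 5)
Your proof is correct and complete: the inductive identity $\psi^n(gh)=\psi^n(g)\,\psi^{\sigma_n(g)}(h)$, the two-fold expansion of $\psi(ghk)$ yielding (3), the collapse of the sum for closure of $\kr(\pi)$ (with $\pi(1_G)=1$ and finiteness supplying inverses), and the cancellation argument $\psi^{\pi(gh^{-1})}=\psi$ for the converse of (2) are all sound, and the non-commutative order of factors is handled correctly throughout. Note that the paper itself gives no proof of this lemma---it is quoted from Jajcay and \v Sir\'a\v n \cite{JS02}---and your argument is essentially the standard one from that source, so there is no genuinely different internal proof to compare against.
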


For a Cayley map $\M = \cm(G,X,p),$ we will denote by
$\aut(\M)_{1_G}$ the stabilizer of the vertex $1_G$ in the group
$\aut(\M)$ in its action on the vertices. Notice that if $\M$ is
regular, then $\aut(\M)_{1_G}$ is generated by the skew-morphism
$\psi$ associated with $\M,$ and so $\aut(\M)$ admits the
factorization $\aut(\M)  = L(G) \, \aut(\M)_{1_G} = L(G) \, \langle
\psi \rangle$.

\paragraph{2.2 $\mathbf{G}$-arc-regular Cayley graphs on dihedral groups.}

Let $\Gamma$ be a graph and let $G \le \aut(\Gamma)$. Then $\Gamma$
is called  \emph{$G$-arc-regular} if $G$ is regular on the dart set
$D(\Gamma)$. Clearly, if $\M = (\Gamma;R)$ is a regular map, then
the underlying graph $\Gamma$ is $\aut(\M)$-arc-regular.  \medskip

Let $n=2m,$ $m$ is an odd number. For the rest of the paper we set $\D_n$ for the group
$\D_n = (D_m \times D_m) \rtimes \langle \sigma \rangle,$
where $\sigma$ is an involution of $\D_n$ which acts on $D_m \times D_m$ by switching the coordinates, i.e.,
$\D_n = \langle D_m \times D_m, \sigma \rangle,$ and
\begin{equation}\label{eq2}
\sigma (d_1,d_2) \sigma = (d_2,d_1) \text{ for all } (d_1,d_2) \in D_m \times D_m.
\end{equation}

The \emph{core} of a subgroup $A \le B$ in a group $B,$ denoted by $\core_B(A),$ is the largest normal subgroup of
$B$ contained  in $A$. The subgroup $A$ is \emph{core-free} in $B$ if $\core_B(A)$ is trivial. \medskip

The following result of Kov\'acs et al.\  \cite{KMM13} will be one of our main tools in
this paper (see \cite[Theorem~2.8]{KMM13}):

\begin{thm}\label{KMM}
Let $\Gamma = \cay(D_n,S)$ be a connected $G$-arc-regular graph such that $L(D_n) \le G$, and
every cyclic subgroup of $L(D_n)$ of order $n$ is core-free in $G$. Then one of the following holds:
\begin{enumerate}[(1)]
\item $n=1$, $\Gamma \cong K_2$, and $G \cong S_2$,
\item $n=2$, $\Gamma \cong K_4$, and $G \cong A_4$,
\item $n=3$, $\Gamma \cong K_{2,2,2}$, and $G \cong S_4$, 
\item $n=4$, $\Gamma\cong Q_3$, and $G\cong S_4$,
\item $n=2m$, $m$ is an odd number, $\Gamma \cong K_{n,n}$, and $G \cong \D_n$. Moreover,
the subgroup of $\D_n$ corresponding to $L(D_n)$ is contained in $D_m \times D_m$.
\end{enumerate}
\end{thm}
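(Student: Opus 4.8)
The plan is to determine the group $G$ first and recover $\Gamma$ from it. Write $C=L(\sg a)$ for the image in $G$ of the order-$n$ cyclic subgroup of $D_n$, so that $\core_G(C)=1$ by hypothesis (and for $n\le 2$ the analogous statement holds for each of the finitely many order-$n$ cyclic subgroups of $L(D_n)$). My first move is to control $K:=\core_G(L(D_n))$, which is a normal subgroup of $L(D_n)\cong D_n$. If $K$ contains a nontrivial subgroup of $\sg a$ that is characteristic in $K$ — which happens whenever $K$ is cyclic or is dihedral of order at least $6$ — then that subgroup is normal in $G$ and contained in $C$, hence trivial by $\core_G(C)=1$; a short case check then forces $K=1$ for all $n\ge 3$ except when $K$ is a Klein four-group, and a direct computation shows that $D_n$ has a normal Klein four-subgroup only for $n\in\{2,4\}$. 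Thus, apart from the cases $n\le 2$ (which I would settle by hand, producing $K_2$ and $K_4$) and the exceptional configuration $n=4$ with $K\cong \Z_2\times\Z_2$ (which I expect to yield the cube $Q_3$ with $G\cong S_4$), I may assume $\core_G(L(D_n))=1$.

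With $L(D_n)$ core-free and $n\ge 3$, the next step is to show that $G$ is imprimitive on $V(\Gamma)$. A primitive $G$ of degree $2n$ containing a regular dihedral subgroup is extremely restricted, and here I would appeal to the classification of primitive permutation groups admitting a regular dihedral subgroup; together with the core-free cyclic condition this should eliminate all but the smallest degrees, so that for $n\ge 3$ the group $G$ preserves a nontrivial block system. Since $L(D_n)$ is regular on $V(\Gamma)=D_n$, every $G$-invariant block system is the set of left cosets of a subgroup $B\le D_n$, and in particular the orbits of a minimal normal subgroup $N$ of $G$ are such cosets. I would then run the analysis on $N$ and $B$: core-freeness of $C$ (and of $L(D_n)$) restricts which $B$ can occur, the crucial dividing line being whether $B\le\sg a$ or not.

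The case analysis should split into two essentially different outcomes. When the blocks are small — concretely, when $N$ has orbits of length $2$ and meets $L(D_n)$ trivially — the quotient action and arc-regularity force $n=3$ and $\Gamma\cong K_{2,2,2}$ with $G\cong S_4$. The generic outcome is that the block system is the pair of cosets of $\sg a$, i.e.\ $\Gamma$ is bipartite with $S\subseteq b\sg a$; here the bipartition is $G$-invariant, $C$ is semiregular on each part, and I expect the hypothesis $\core_G(C)=1$ to rule out every proper arc-transitive bipartite Cayley graph on $D_n$ and leave $\Gamma\cong K_{n,n}$. Writing $n=2m$, the arc-count $|G|=2n^2$ matches $|\D_n|$, and locating $G$ among the arc-regular subgroups of $\aut(K_{n,n})$ identifies it with $\D_n$; the parity restriction $m$ odd then emerges because $L(D_n)$ must contain an element of order $n=2m$ inside $D_m\times D_m$, and a rotation of order $m$ in one coordinate times a reflection in the other has order $2m$ exactly when $m$ is odd (see \eqref{eq2}).

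The main obstacle is the imprimitive case analysis, and within it the bipartite branch: one must prove that core-freeness of $C$ genuinely excludes every proper arc-transitive bipartite dihedrant, so that only the complete bipartite graph survives, and then correctly single out $\D_n$ (with its parity restriction) among all arc-regular subgroups of $\aut(K_{n,n})$. A secondary difficulty is the primitive reduction, since ruling out large primitive $G$ containing a regular dihedral subgroup ultimately rests on the classification of finite simple groups; and the octahedron and cube cases require separate, careful treatment, because there the minimal normal subgroup behaves atypically — it meets $L(D_n)$ trivially for the octahedron but lies inside it for the cube.
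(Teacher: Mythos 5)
You should first be aware that the paper contains no proof of this statement: it is quoted verbatim from Kov\'acs--Maru\v si\v c--Muzychuk \cite{KMM13} (Theorem~2.8 there), so there is no internal argument to compare yours against. Judged on its own terms, your proposal is a reasonable skeleton but not a proof. The opening reduction is sound: a nontrivial $K=\core_G(L(D_n))$ that is cyclic or dihedral of order at least $6$ has a characteristic nontrivial subgroup inside $\sg{a}$, contradicting $\core_G(C)=1$, and normal Klein four-subgroups of $D_n$ exist only for $n\in\{2,4\}$. But the two steps that carry all the weight are left as declarations of intent: the elimination of primitive $G$ is outsourced to ``the classification of primitive permutation groups admitting a regular dihedral subgroup'' (a CFSG-dependent result you would need to state and apply precisely), and in the imprimitive branch you only ``expect'' that $\core_G(C)=1$ excludes every proper arc-transitive bipartite dihedrant and that $G$ can then be located as $\D_n$ inside $\aut(K_{n,n})$ with the stated parity restriction. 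These are precisely the hard parts of \cite{KMM13}, occupying most of that paper; asserting them does not prove the theorem.

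There is also a concrete error in your bipartite branch. You claim the relevant block system is the pair of cosets of $\sg{a}$, so that $S\subseteq b\sg{a}$. That is not what happens in case (5): the connection set realized in Theorem~\ref{c-free} is $a\sg{a^2}\cup b\sg{a^2}$, which contains rotations, and the (unique) bipartition of $K_{n,n}$ is then the coset partition of the index-two \emph{dihedral} subgroup $\sg{a^2,ab}$, not of $C=\sg{a}$; so your subsequent analysis of the bipartite case starts from a false normalization. Finally, your parity argument ($D_m\times D_m$ contains an element of order $2m$ iff $m$ is odd) is correct as far as it goes, but it presupposes the conclusion $L(D_n)\le D_m\times D_m$, which is itself part of what must be proved: $\D_n$ has elements of order $2m$ outside $D_m\times D_m$ as well, e.g.\ $(c,1)\sigma$, whose square is $(c,c)$ of order $m$.
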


\paragraph{2.3 Quotient Cayley maps.}

Let $\M = \cm(G,X,p)$ be a regular Cayley map. Suppose, in addition,
that there exists a subgroup $N \le G$ such that $N$  is normal in
$G$ and the the set of $N$-cosets is a block system of $\aut(\M)$.
In what follows it will be simply said that $G/N$ is a block system
for $\aut(\M)$. Furthermore, we set $X/N = \{ N x : x \in X \}$.
Clearly, $X/N$ is a generating subset of the factor group $G/N$ and
$X/N = (X/N) ^{-1}$.
 Also, since $\cay(G,X)$ is $
\aut(\M)$-arc-regular, no element of $X$ belongs to $N$, and so
$1_{G/N} \notin X/N$. \medskip

There is an action of $\aut(\M)$ on the set of blocks, i.e., on
$G/N$. For $g \in \aut(\M),$ we let $\bar{g}$ denote the action of
$g$ on $G/N$, and for a subgroup $H \le \aut(\M)$ set $H^{G/N} =\{
\overline{g} : g \in H \}$. Notice that $(L_g)^{G/N} = L_{N g}$ for
every $g \in G$. Let us write $X = \{x_1,\ldots,x_{k}\}$ and $p =
(x_1,x_2,\ldots,x_{k})$. Then it follows that the cycle $p^{G/N} : =
(x_1 N,x_2 N,\ldots,x_{k} N)$ is well-defined (see \cite{KMM13});
and so is the Cayley map $\cm(G/N,X/N,p^{G/N})$. The latter map is
called  the \emph{quotient} of $\M$ with respect to the block system
$G/N,$ and it will be also denoted by $\M/N$. We note that  the
quotient map $\M/N$ coincides with the so called
\emph{Cayley-quotient} induced by the normal subgroup $N$ which was
defined by Zhang \cite{Z}, and in the same paper $\M$ is also
referred to as the \emph{Cayley-cover} of $\M/N$.
 We collect below some properties (see \cite[Corollary~3.5]{KMM13}):

\begin{lem}\label{quo}
Let $\M = \cm(G,X,p)$ be a regular Cayley map with associated
skew-morphism $\psi$ and power function $\pi,$ and let $N \le  G$ be
a normal subgroup in $G$ and $G/N$ is a block system for $\aut(\M)$.
Then the following hold:
\begin{enumerate}[(1)]
\item $\M/N = \cm(G/N,X/N,p^{G/N})$ is also regular.
\item $\aut(\M/N) = \aut(\M)^{G/N}$.
\item The skew-morphisms associated  with $\M/N$ is equal to $\psi^{G/N}$.
\item The order
$|\langle \psi \rangle| \le |N| \cdot \big|\langle \psi^{G/N}
\rangle\big|,$  and equality holds if and only if $X$ is a union of
$N$-cosets.
\item The power function $\pi^{G/N}$ associated with $\M/N$ satisfies
$$
\pi^{G/N}(Ng) \equiv \pi(g)\big(\text{mod } \big|\langle \psi^{G/N}
\rangle\big| \big) \text{ for every } g \in G.
$$
\end{enumerate}
\end{lem}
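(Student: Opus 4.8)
The plan is to base the entire argument on the single induced permutation $\psi^{G/N}$ and to read off parts (1)--(5) from its properties. First I would record that, since $G/N$ is a block system for $\aut(\M)$ and $\psi$ lies in $\aut(\M)_{1_G}\le\aut(\M)$, the skew-morphism $\psi$ permutes the $N$-cosets; as $N\trianglelefteq G$, left and right cosets coincide, so $\psi$ descends to a well-defined permutation $\psi^{G/N}$ of $G/N$ with $\psi^{G/N}(Ng)=N\psi(g)$, and hence $(\psi^{G/N})^i(Ng)=N\psi^i(g)$ for all $i$. Throughout I would freely use the facts recorded in the excerpt: $\aut(\M)=L(G)\langle\psi\rangle$ with $\aut(\M)_{1_G}=\langle\psi\rangle$, that $|\langle\psi\rangle|=|X|$, that $(L_g)^{G/N}=L_{Ng}$, and that $\langle\psi\rangle$ acts regularly on $X$ via $\psi|_X=p$.

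For (5) and the skew-morphism property of $\psi^{G/N}$, I would compute $\psi^{G/N}(Ng\cdot Nh)=N\psi(gh)=N\psi(g)\psi^{\pi(g)}(h)$ and, using the normality of $N$ to move the coset representative past $\psi(g)$, rewrite this as $\psi^{G/N}(Ng)\,(\psi^{G/N})^{\pi(g)}(Nh)$. This shows the skew-morphism identity holds for $\psi^{G/N}$ with a power value congruent to $\pi(g)$ modulo $|\langle\psi^{G/N}\rangle|$; since the power function of any skew-morphism is determined modulo its order by the permutation alone (the map $h\mapsto\psi(g)^{-1}\psi(gh)$ is a fixed power of $\psi^{G/N}$), the value $\pi^{G/N}(Ng)$ is well-defined and independent of the representative $g$, which is exactly (5). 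Then, because $\psi(x)=p(x)$ for $x\in X$, I get $\psi^{G/N}(Nx)=Np(x)=p^{G/N}(Nx)$, so $\psi^{G/N}$ agrees with $p^{G/N}$ on the generating set $X/N$; Theorem~\ref{JS} then gives that $\M/N=\cm(G/N,X/N,p^{G/N})$ is regular, proving (1).

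For (2) and (3) I would argue that every element of $\aut(\M)^{G/N}$ preserves the rotation and dart-reversing involution of the (now well-defined) quotient Cayley map $\M/N$ and hence lies in $\aut(\M/N)$. Using $\aut(\M)=L(G)\langle\psi\rangle$ together with $(L_g)^{G/N}=L_{Ng}$, I obtain $\aut(\M)^{G/N}=L(G/N)\langle\psi^{G/N}\rangle$. On the other hand, regularity of $\M/N$ gives $\aut(\M/N)=L(G/N)\langle\phi\rangle$, where $\phi$ is its associated skew-morphism; comparing the stabilizers of the base block and noting that $\phi$ and $\psi^{G/N}$ both restrict to $p^{G/N}$ on $X/N$ and generate the same block-stabilizer forces $\phi=\psi^{G/N}$, yielding (3) and the reverse inclusion, hence the equality (2).

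Finally, for (4) I would consider the restriction homomorphism $\langle\psi\rangle\to\langle\psi^{G/N}\rangle$, $\psi^i\mapsto(\psi^{G/N})^i$, which is onto with kernel $K_0=\{\psi^i:(\psi^{G/N})^i=\mathrm{id}\}$, so $|\langle\psi\rangle|=|K_0|\cdot|\langle\psi^{G/N}\rangle|$ and the claimed inequality reduces to $|K_0|\le|N|$. Since $\langle\psi\rangle$ acts regularly on $X$, its subgroup $K_0$ acts semiregularly on $X$ with all orbits of the common size $|K_0|$; as $K_0$ fixes every block setwise, each orbit lies in a single $N$-coset of size $|N|$, whence $|K_0|\le|N|$. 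Moreover, $\langle\psi\rangle$ is transitive on $X$ and permutes the blocks meeting $X$ transitively, so $|Nx\cap X|$ is a constant equal to $|K_0|$; thus equality $|K_0|=|N|$ holds exactly when every block meeting $X$ is contained in $X$, i.e. when $X$ is a union of $N$-cosets, which is (4). I expect the main obstacle to be the bookkeeping in (5) and (2): verifying that the induced power function is genuinely well-defined on cosets, and that the induced block action is the \emph{full} automorphism group of $\M/N$ rather than a proper subgroup, both of which hinge on using the normality of $N$ and the order identity $|\langle\psi\rangle|=|X|$ with care.
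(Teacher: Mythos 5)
The paper does not actually prove Lemma~\ref{quo}: it is quoted from \cite[Corollary~3.5]{KMM13} with no internal argument, so there is nothing of the authors' own to compare against. Your proposal is a correct, self-contained derivation, organized in the natural way: everything is read off the induced permutation $\psi^{G/N}$, whose skew-morphism identity with exponent $\pi(g)$ yields (5) and, via Theorem~\ref{JS} and the uniqueness of the associated skew-morphism, yields (1) and (3); (2) comes from pushing the factorization $\aut(\M)=L(G)\langle\psi\rangle$ through the block-action homomorphism; and (4) from the kernel $K_0$ of the restriction $\langle\psi\rangle\to\langle\psi^{G/N}\rangle$.

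Two steps deserve tightening, though neither is fatal. First, in (2) you assert that each induced block permutation ``preserves the rotation and dart-reversing involution of $\M/N$''; checking this at the level of darts is precisely the fiddly part, and you can bypass it: once (1) and (3) are in hand, $\aut(\M/N)=L(G/N)\langle\psi^{G/N}\rangle$ by the standard factorization of the automorphism group of a regular Cayley map, while $\aut(\M)^{G/N}=\big(L(G)\langle\psi\rangle\big)^{G/N}=L(G/N)\langle\psi^{G/N}\rangle$ because the block action is a group homomorphism, and equality follows. Second, in (4) the identity $|Nx\cap X|=|K_0|$ does not follow merely from transitivity of $\langle\psi\rangle$ on $X$: in the regular action of $\langle\psi\rangle$ on $X$, the setwise stabilizer of the block $Nx\cap X$ is the point stabilizer of $Nx$ under $\langle\psi^{G/N}\rangle$, and this equals $K_0$ only because $\langle\psi^{G/N}\rangle$ acts regularly, not just transitively, on $X/N$. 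That regularity uses $|\langle\psi^{G/N}\rangle|=|X/N|$, i.e., the fact recorded after Theorem~\ref{JS} that an associated skew-morphism has order equal to the valency, applied to $\M/N$ via (1) and (3). With these two points made explicit the proof is complete.
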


\section{Regular Cayley maps with a given group}

Let $G$ be a finite group, $H$ be a non-trivial subgroup of $G,$ and let $x,y$ be elements  in $G$ such that $y \ne 1_G$.
We say that the ordered quadruple $(G,H,x,y)$ is \emph{admissible} if  the following properties hold:
\begin{itemize}
\item  $G = H Y$ and $|H \cap Y|= 1,$ where $Y = \langle y \rangle;$
\item $Y$ is  core-free in $G;$
\item $G = \langle Y, x \rangle$ and $Y x Y = Y x^{-1} Y$.
\end{itemize}
Every admissible quadruple $(G,H,x,y)$ gives rise to a regular Cayley map on $H$ defined as follows. \medskip

First, recall that the \emph{coset graph} $\Gamma = \cosg(G,Y,YxY)$
has vertex set $G/Y,$ the set of left $Y$-cosets in $G,$ and its
edges are in the form $\{ g_1 Y,g_2 Y\}, g_1,g_2 \in G$ and
$g_1^{-1} g_2 \in Y x Y$. Note that the edges are well-defined
because of the condition $Y x Y = Y x^{-1} Y$. Also, the condition
$G = \langle Y, x \rangle$ makes $\Gamma$ to be connected. Since the
group $Y$ is core-free in $G,$ the action of $G$ on the set $G/Y$ is
a faithful permutation representation of $G$. Furthermore, the dart
set $D(\Gamma)$ is, in fact, equal to the orbit of the dart $(Y,x
Y)$ under $G$.  \medskip

Now, using that $G = H Y$ and $|H \cap Y| = 1,$ there is a bijection
from $G/Y$ onto $H$. Observe that this  bijection induces an
isomorphism from $\Gamma$ to the Cayley graph $\cay(H,X),$ where $X$
is the subset of $H$ defined by
$$
X = \{ h \in H : h Y \subseteq Y x  Y\}.
$$
Notice that $X$ is the unique subset of $H$ that satisfies $X Y = Y
x Y$.

Also, by the above bijection we obtain a faithful permutation representation of $G$ on $H$.
More precisely, an element $g \in G$ acts on $H$ by letting $g(h)$ to be the unique element of $H$ for which
\begin{equation}\label{eq3}
g(h) Y = gh Y, \text{ where } g \in G, h \in H.
\end{equation}
Furthermore, $X$ becomes the orbit of $x$ under $Y$ in the above action. Hence we can define the cyclic permutation $p$ of
$X$ as
$$
p = \big( x, y(x), y^2(x), \ldots, y^{|Y|-1}(x) \big).
$$
Now, the Cayley map $\cm(H,X,p)$ will be called the \emph{Cayley map induced by} $(G,H,x,y),$ and in what follows we will
write $\cm(G,H,x,y)$ for $\cm(H,X,p)$.

\begin{lem}\label{aq1}
Let $(G,H,x,y)$ be an admissible quadruple and let $\M = \cm(G,H,x,y)$ be the Cayley map on $H$ induced by
$(G,H,x,y)$. Then the following hold:
\begin{enumerate}[(1)]
\item $\M$ is regular, and $\aut(\M) \cong G$.
\item If $\alpha : G \to \widehat{G}$ is an isomorphism, then the quadruple
$(\alpha(G),\alpha(H),\alpha(x),\alpha(y))$ is also admissible; moreover, $\M$ and
$\M(\alpha(G),\alpha(H),\alpha(x),\alpha(y))$ are equivalent.
\end{enumerate}
\end{lem}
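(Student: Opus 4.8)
The plan is to realise the faithful action \eqref{eq3} of $G$ on $H$ (equivalently, on the left cosets $G/Y$) as the source of the skew-morphism, and then to read off regularity, the automorphism group, and the behaviour under $\alpha$ from it. I would first record two facts about this action, writing $\overline{g}$ for the permutation of $H$ induced by $g \in G$. It is a genuine left action, $\overline{g_1 g_2} = \overline{g_1}\,\overline{g_2}$, and it is faithful precisely because its kernel on $G/Y$ is $\core_G(Y)$, which is trivial by admissibility. Moreover an element $h_0 \in H$ acts as the left translation $L_{h_0}$, since $h_0 h \in H$ already represents the coset $h_0 h Y$; thus $\overline{h_0} = L_{h_0}$ and $\overline{G} = L(H)\,\langle \overline{y}\rangle$, using $G = HY$ and $Y = \langle y\rangle$.

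For part (1) I would set $\psi = \overline{y}$ and check it is the skew-morphism associated with $\M$. Since $y \in Y$ we get $\psi(1_H)Y = yY = Y$, so $\psi(1_H) = 1_H$. For $g \in H$ the element $\psi(g)^{-1} y g$ lies in $Y = \langle y\rangle$ by the definition of $\psi(g)$, so we may write $yg = \psi(g)\, y^{\pi(g)}$ with $\pi(g)$ well-defined modulo $|Y|$; then for $h \in H$,
\[
\psi(gh)\,Y = y g h\, Y = \psi(g)\, y^{\pi(g)} h\, Y = \psi(g)\,\psi^{\pi(g)}(h)\, Y,
\]
where the last step uses $\overline{y^{\pi(g)}} = \psi^{\pi(g)}$ together with the defining relation of the action. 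As $\psi(g)\psi^{\pi(g)}(h) \in H$, this forces $\psi(gh) = \psi(g)\,\psi^{\pi(g)}(h)$, so $\psi$ is a skew-morphism of $H$ with power function $\pi$. Because $X$ is the $\langle\psi\rangle$-orbit of $x$ and $p$ sends $\psi^i(x)$ to $\psi^{i+1}(x)$, we have $\psi|_X = p$; hence $\M$ is regular by Theorem~\ref{JS}, and by uniqueness $\psi$ is the associated skew-morphism. The factorization recalled in 2.1 then gives $\aut(\M) = L(H)\,\langle\psi\rangle = \overline{G} \cong G$.

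For part (2) I would transport the whole construction along $\alpha$, writing $\widehat{Y} = \alpha(Y) = \langle\alpha(y)\rangle$. Admissibility of $(\alpha(G),\alpha(H),\alpha(x),\alpha(y))$ is immediate, since a group isomorphism preserves products of subsets, intersections, generation, the value $\core_{\alpha(G)}(\widehat{Y}) = \alpha(\core_G(Y)) = 1$, and the symmetry $\alpha(YxY) = \widehat{Y}\alpha(x)\widehat{Y} = \alpha(Yx^{-1}Y) = \widehat{Y}\alpha(x)^{-1}\widehat{Y}$, while $\alpha(y) \ne 1_{\alpha(G)}$. For equivalence I would show that $\alpha|_H : H \to \alpha(H)$ is a group isomorphism carrying $X$ onto the generating set $\widehat{X}$ of the induced map and satisfying $\alpha p = \widehat{p}\,\alpha$ on $X$. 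The first holds because $h \in X \iff hY \subseteq YxY \iff \alpha(h)\widehat{Y} \subseteq \widehat{Y}\alpha(x)\widehat{Y} \iff \alpha(h) \in \widehat{X}$, using that $\alpha$ is a bijection. The second follows by applying $\alpha$ to the defining relation $y(h')Y = yh'Y$ of $p(h') = y(h')$: this yields $\alpha(y(h'))\widehat{Y} = \alpha(y)\alpha(h')\widehat{Y}$ with $\alpha(y(h')) \in \alpha(H)$, which is exactly the characterization of $\widehat{p}(\alpha(h'))$. By the definition of equivalence from the introduction, $\M \equiv \M(\alpha(G),\alpha(H),\alpha(x),\alpha(y))$.

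The main obstacle is the verification in part (1) that $\psi = \overline{y}$ is a skew-morphism, i.e.\ that the ``carry'' exponent $\pi(g)$ determined by $yg = \psi(g)\,y^{\pi(g)}$ is a genuine function $H \to \{0,\dots,|Y|-1\}$ serving as the power function; this is exactly where the three admissibility hypotheses $G = HY$, $|H \cap Y| = 1$, and $Y = \langle y\rangle$ cyclic are used together, the first two to make $\overline{\,\cdot\,}$ a well-defined action on $H$ and the last to extract a single integer exponent. Everything else, including the identification $\aut(\M) \cong G$ and the transport along $\alpha$ in part (2), is then routine given the structural results of Section~2.
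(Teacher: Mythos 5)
Your proof is correct and follows essentially the same route as the paper's: in part (1) you identify the skew-morphism as the permutation of $H$ induced by $y$ under the coset action and verify the defining identity via the relation $yg=\psi(g)\,y^{\pi(g)}$ in $G$, which is just the pull-back of the paper's factorization $\psi_y L_h = L_{\psi_y(h)}\psi_y^{\pi(h)}$ in the (faithful) permutation group $L(H)\langle\psi_y\rangle$. Part (2), transporting $X$ and $p$ along $\alpha$ and using the uniqueness of the subset satisfying $\widehat{X}\alpha(Y)=\alpha(Y)\alpha(x)\alpha(Y)$, coincides with the paper's argument.
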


\proof Let us consider the action of $G$ on $H$ defined in
\eqref{eq3}. Let $\psi_y$ denote the
permutation of $H$ describing the action of $y$. Then  $p(x) =
\psi_y(x)$ for every $x \in X,$ and thus case (1) of the lemma
follows if we can prove that $\psi_y$ is a skew-morphism of $H$ (see
Theorem~\ref{JS} and the remark after Lemma~\ref{JS-lem}).

Notice that the permutation of $H$ describing the action of $h \in
H$ is equal to the left multiplication $L_h$. Thus the permutation
subgroup corresponding to the action of $G$ factorizes as $L(H)
\langle \psi_y \rangle,$ in particular, $L(H) \langle \psi_y \rangle
= \langle \psi_y \rangle L(H)$.

We compute next $\psi_y(1_H)$. By \eqref{eq3}, $\psi_y(1_H) Y =  y1_H Y = Y,$ hence $\psi_y(1_H) \in H \cap Y,$ and so
$\psi_y(1_H) = 1_H$.

Pick an arbitrary $h \in H$. Since $\langle \psi_y \rangle L(H) =
L(H) \langle \psi_y \rangle,$ $\psi_y L_h = L_{h'} \psi_y^i$ for a
unique $h' \in H$ and a unique $i \in \{1,\ldots,|Y|\}$. Notice that
$i$ depends entirely on $h,$ and thus we may define the function
$\pi : H \to \{1,\ldots,|Y|\}$ by letting $\pi(h) = i$. Also,
$(\psi_y L_h)(1_H)=\psi_y(h)$ and $(L_{h'} \psi_y^i)(1_H) = h'$.
These give that $h' = \psi_y(h)$. Thus if $h_1,h_2 \in H,$ then we
may write
$$
\psi_y(h_1 h_2) = (\psi_y L_{h_1})(h_2) = (L_{\psi_y(h_1)} \psi_y^{\pi(h_1)})(h_2) = \psi_y(h_1) \psi_y^{\pi(h_1)}(h_2),
$$
showing that $\psi_y$ is indeed a skew-morphism of $H$ with power function $\pi$.
So case (1) of the lemma is proved.  \medskip

We turn to case (2).  It is obvious that all defining axioms of an admissible quadruple
are preserved by $\alpha,$ and so
$(\alpha(G),\alpha(H),\alpha(x),\alpha(y))$ is also admissible.

By definition, $\cm(G,H,x,y) = \cm(H,X,p),$ where $X$ is the unique subset of $H$ satisfying
$X Y = Y x Y;$ and for $z \in X,$ $p(z)$ is the unique element in $H$ satisfying
$p(z) Y = y z Y$.  Also, $\cm(\alpha(G),\alpha(H),\alpha(x),\alpha(y)) =
\cm(\alpha(H),\widehat{X},\widehat{p}),$ where
$\widehat{X}$ is the unique subset of $\alpha(H)$ satisfying
$\widehat{X} \alpha(Y) = \alpha(Y) \alpha(x) \alpha(Y);$ and for $\widehat{z} \in \widehat{X},$
$\widehat{p}(\widehat{z})$ is the unique element in $\alpha(H)$ satisfying
$\widehat{p}(\widehat{z}) \alpha(Y) = \alpha(y) \widehat{z} \alpha(Y)$.

Obviously, $\alpha(X) \alpha(Y) = \alpha(Y) \alpha(x) \alpha(Y)$.
This implies that $\widehat{X} = \alpha(X),$ i.e., $\alpha$ maps $X$
onto $\widehat{X}$. For any $z \in X$, it follows from $p(z) Y = y z
Y$ that
$$
\alpha(p(z)) \alpha(Y) = \alpha(p(z)Y) = \alpha(yzY) =  \alpha(y)
\alpha(z) \alpha(Y).
$$
Since $\alpha(p(z)) \in \alpha(H),$ we have $\alpha(p(z)) =
\widehat{p}(\alpha(z))$. We conclude that $\alpha p = \widehat{p}
\alpha,$ and so $\cm(H,X,p)$ and
$\cm(\alpha(H),\widehat{X},\widehat{p})$ are equivalent. Case (2) of
the lemma is proved. \QED

\begin{rem}\label{rem1}
Let $\varphi$ be an arbitrary permutation of $H$ such that
$\varphi(1_H) = 1_H$. Now, it becomes apparent from the above proof
that a sufficient condition for $\varphi$ to be a skew-morphism of
$H$ is that $\varphi L(H) \subseteq L(H) \langle \varphi \rangle$.
\end{rem}

\begin{lem}\label{aq2}
Let $\M = \cm(H,X,p)$ be a regular Cayley map such that $\aut(\M)
\cong G$. Now there exists an admissible quadruple $(G,K,x,y)$ such
that $H \cong K$ and $\M$ and $\cm(G,K,x,y)$ are equivalent.
\end{lem}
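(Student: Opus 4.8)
The plan is to realize the abstract isomorphism $\aut(\M)\cong G$ by first producing an admissible quadruple inside $\aut(\M)$ itself and then transporting it to $G$ via Lemma~\ref{aq1}(2). Write $H$ for the underlying group of $\M=\cm(H,X,p)$, let $\psi$ be the skew-morphism associated with $\M$, and recall the factorization $\aut(\M)=L(H)\langle\psi\rangle$ with $\aut(\M)_{1_H}=\langle\psi\rangle$. I would set $K=L(H)$ and $Y=\langle\psi\rangle$; since $L(H)$ is regular on the vertices $H$ while $\langle\psi\rangle$ is the stabilizer of $1_H$, we get $\aut(\M)=KY$ with $|K\cap Y|=1$, which is the first admissibility condition, and $K\cong L(H)\cong H$ delivers $H\cong K$. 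The remaining task is to exhibit a suitable involution $\xi\in\aut(\M)$ playing the role of $x$.

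For core-freeness of $Y$ I would argue that $\langle\psi\rangle$ already acts faithfully on the vertex set $H$: by definition $\psi$ is a genuine permutation of the set $H$, so $\psi^c$ fixes every vertex only when $\psi^c=1$. Hence the kernel of the action of $\aut(\M)$ on $H$, being a normal subgroup contained in the point stabilizer $\langle\psi\rangle$, is trivial, and $Y$ is core-free. To choose $\xi$, fix the reference dart $d_0=(1_H,x_1)$ where $x_1$ is the first entry of the cycle $p$, and let $\xi$ be the unique element of $\aut(\M)$ carrying $d_0$ to its reverse $T(d_0)$, which exists because $\aut(\M)$ is regular on darts. Since $\xi$ commutes with $T$, one checks that $\xi^2$ fixes $d_0$ and hence $\xi^2=1$; moreover reading off initial vertices gives $\xi(1_H)=x_1\in X$. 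The involution property makes the condition $Y\xi Y=Y\xi^{-1}Y$ automatic. Finally $G=\langle Y,\xi\rangle$ follows from connectivity: the orbit of $1_H$ under $\langle\psi,\xi\rangle$ contains $x_1$ and is closed under passing to neighbours (each $\gamma\in\langle\psi,\xi\rangle$ is a graph automorphism, and the neighbours of $1_H$ are $\{\psi^i\xi(1_H)\}$), so $\langle\psi,\xi\rangle$ is vertex-transitive and contains $\langle\psi\rangle=\aut(\M)_{1_H}$, forcing $\langle\psi,\xi\rangle=\aut(\M)$.

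It then remains to verify that the induced map $\cm(\aut(\M),K,\xi,\psi)=\cm(K,X',p')$ is equivalent to $\M$, and for this I would use the isomorphism $\beta:K=L(H)\to H$, $L_h\mapsto h$. Unwinding the coset-graph construction, the left cosets contained in $Y\xi Y$ are exactly the $\psi^i\xi\langle\psi\rangle$, which correspond under the bijection $gY\mapsto g(1_H)$ to the vertices $\psi^i(x_1)$; since $p$ is a single cycle on $X$ and $\psi$ restricts to $p$ on $X$, these run through all of $X$, giving $\beta(X')=X$. For the rotations, the key observation is that the coset action of $\psi$ on $H$ coincides with the skew-morphism $\psi$ itself, so under $\beta$ the induced rotation $(\xi,\psi(\xi),\psi^2(\xi),\dots)$ becomes $(x_1,p(x_1),p^2(x_1),\dots)=p$. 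Thus $\beta$ intertwines the two maps, and applying Lemma~\ref{aq1}(2) to an isomorphism $\alpha:\aut(\M)\to G$ yields the admissible quadruple $(G,\alpha(K),\alpha(\xi),\alpha(\psi))$ with $\cm(G,\alpha(K),\alpha(\xi),\alpha(\psi))\equiv\M$, as required. The step I expect to demand the most care is this last bookkeeping one, matching $(X',p')$ back to $(X,p)$ through the coset bijection; the conceptual crux, however, is the faithfulness argument that guarantees $Y$ is core-free.
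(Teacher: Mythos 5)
Your proposal is correct and follows the same overall strategy as the paper: factor $\aut(\M)=L(H)\langle\psi\rangle$, take $K=L(H)$ and $Y=\langle\psi\rangle$, check admissibility, identify the induced map with $\M$ via $L_h\mapsto h$, and finish with Lemma~\ref{aq1}(2). The one genuine point of divergence is your choice of the third entry of the quadruple: the paper takes the left translation $L_{x_1}$ for a fixed $x_1\in X$, and must then prove $\Psi L_{x_1}\Psi=\Psi L_{x_1}^{-1}\Psi$ by characterizing the cosets $L_h\Psi$ inside $\Psi L_{x_1}\Psi$ and invoking $X=X^{-1}$; you instead take the involution $\xi$ carrying the dart $(1_H,x_1)$ to its reverse, which makes $Y\xi Y=Y\xi^{-1}Y$ automatic but moves the work to the identification step, since $\xi\notin L(H)$ and one must check that the element of $K$ representing the coset $\xi Y$ is $L_{x_1}$ (which you do correctly via $\xi(1_H)=x_1$, so $Y\xi Y=YL_{x_1}Y$ and the two constructions yield the same induced map). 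Both routes are valid; the paper's trades a slightly longer double-coset computation for a completely transparent bookkeeping step, yours the reverse. Your faithfulness argument for the core-freeness of $Y$ is also sound and makes explicit what the paper only asserts.
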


\proof  Let $\psi$ be the skew-morphism associated with $\M$ and let
$\Psi = \langle \psi \rangle$. The group $\aut(\M)$ factorizes as
$\aut(\M) = L(H) \Psi$ such that $|L(H) \cap \Psi|=1,$ see the
remark after Lemma~\ref{JS-lem}. Also, by $\Psi$ being the
stabilizer of the vertex $1_H$ in $\aut(\M),$ the group $\Psi$  is
core-free in $\aut(\M)$.  Let us fix an element $x_1 \in X$. Using
that $X$ is a generating set of $H,$ it is not hard to show that
$\langle\Psi,L_{x_1} \rangle$ is transitive on $H$.  Using this and
that $\langle \Psi, L_{x_1} \rangle \ge \Psi = \aut(\M)_{1_H},$  we
obtain that $\langle\Psi,L_{x_1} \rangle = \aut(\M)$. Since $X =
X^{-1},$ $x_1^{-1}$ also belongs to $X$, and hence $\Psi L_{x_1}
\Psi = \Psi L_{x_1}^{-1} \Psi$. We may summarize all these as the
quadruple $(\aut(\M),L(H),L_{x_1},\psi)$ is admissible.
\medskip

Denote by $L$ the isomorphism from $H$ to $L(H)$ defined by $L: h \mapsto L_h$.
Let $h$ be an element in $H$ such that $L_h \Psi \subset \Psi L_{x_1} \Psi$. Then $L_h = \psi^i L_{x_1} \psi^j$ for some integers $i$ and $j,$
and so $h = L_h(1_H) = (\psi^i L_{x_1} \psi^j)(1_H) = \psi^j(x_1)$. As $X$ is the orbit of $x_1$ under $\Psi$ we
see that $h \in X$. On the other hand, for any $h \in X,$ $h = \psi^k(x_1)$ for some integer $k$. This  implies that the product
$L_h^{-1} \psi^k L_{x_1}$ fixes $1_H,$ and so $L_h^{-1} \psi^k L_{x_1}  \Psi = \Psi,$ namely $L_h \Psi \subset \Psi L_{x_1} \Psi$.
Thus
\begin{equation}\label{eq4}
\cm(\aut(\M),L(H),L_{x_1},\psi) = \cm(L(H),L(X),\widehat{p}),
\end{equation}
where for $z \in X,$ $\widehat{p}(L_z)$ is the unique element of
$L(H)$ satisfying $\widehat{p}(L_z) \Psi = \psi L_z \Psi$. As $(\psi
L_z)(1_H)= \psi(z)$ and $L_{p(z)}(1_H) = p(z)= \psi(z),$ we have
$L_{p(z)} \Psi = \psi L_z \Psi$. Since $L_{p(z)} \in L(H),$
$L_{p(z)} = \widehat{p}(L_z)$. We conclude that $L p=\widehat{p} L,$
and so $\cm(H,X,p)$  and $\cm((L(H),L(X),\widehat{p})$ are
equivalent.

Let $\alpha$ be an isomorphism $\alpha : \aut(\M) \to G$.
Then by \eqref{eq4} and Lemma~\ref{aq1}(2),
$\cm(L(H),L(X),\widehat{p})$ is also equivalent with $\cm(G,\alpha(L(H)),\alpha(L_{x_1}),\alpha(\psi))$.
The lemma follows by choosing $K = \alpha(L(H)),$ $x=\alpha(L_{x_1}),$ and $y=\alpha(\psi)$. \QED

We combine the above lemmas with Theorem~\ref{KMM} to have the following theorem.

\begin{thm}\label{c-free}
Let $\M$ be a regular Cayley map on $D_n$ such that the unique
cyclic subgroup of $L(D_n)$ of order $n$ is core-free in $\aut(\M)$
with $n \ge 5$. Then $n=2m,$ $m$ is an odd number, $\M \equiv
\cm(D_n,a \langle a^2 \rangle \, \cup \, b \langle a^2 \rangle, p),$
where
$$
p = (b,a,a^2 b,a^3,a^4 b,\ldots,a^{n-2}b,a^{n-1}).
$$
\end{thm}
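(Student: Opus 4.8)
The plan is to combine Theorem~\ref{KMM} with the dictionary between regular Cayley maps and admissible quadruples (Lemmas~\ref{aq1} and~\ref{aq2}), reducing everything to a single explicit quadruple inside $\D_n$ and then reading off the rotation. First I would put $G=\aut(\M)$ and note that, since $\M$ is regular, its underlying graph $\Gamma=\cay(D_n,X)$ is $G$-arc-regular with $L(D_n)\le G$. As $n\ge 5>2$, the subgroup $L(C_n)$ is the unique cyclic subgroup of $L(D_n)$ of order $n$, so the hypothesis says that $L(C_n)$ is core-free in $G$. Hence Theorem~\ref{KMM} applies, and since $n\ge 5$ excludes its cases (1)--(4) we are in case (5): $n=2m$ with $m$ odd, $\Gamma\cong K_{n,n}$, $G\cong\D_n$, and under a suitable isomorphism $L(D_n)$ maps into $D_m\times D_m$. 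As $n=2m$ is even, in fact $n\ge 6$ and $m\ge 3$. In particular the valency of $K_{n,n}$ gives $|X|=n=2m$, so the associated skew-morphism $\psi$ has order $2m$.

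Next, by Lemma~\ref{aq2} there is an admissible quadruple $(\D_n,K,x,y)$ with $K\cong D_n$ and $\M\equiv\cm(\D_n,K,x,y)$, where $y$ is the image of $\psi$ and so has order $2m$, the group $Y=\sg{y}$ is a core-free complement of $K$, and, fixing the isomorphism as in Theorem~\ref{KMM}, we may assume $K\le D_m\times D_m$. Because $X$ contains the involutions $a^{2j}b$, I may take the distinguished element $x$ to be such an involution, so that $x=x^{-1}$ and the axiom $YxY=Yx^{-1}Y$ holds automatically. By Lemma~\ref{aq1}(2) the map attached to a quadruple depends only on its $\aut(\D_n)$-orbit; thus it suffices to pin down $(\D_n,K,x,y)$ up to $\aut(\D_n)$.

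I would then work in coordinates. Writing $D_m=\sg{r,s}$ with $r^m=s^2=1$ and $srs=r^{-1}$, and $\D_n=(D_m\times D_m)\rtimes\sg{\sigma}$ with $\sigma$ interchanging the two factors, one verifies that
$$
K=\sg{(r,s),(s,1)}\cong D_{2m},\qquad y=(r,1)\sigma,\qquad x=(s,1)
$$
form an admissible quadruple: $K\cap Y=1$ and $KY=\D_n$ (so $K$ is regular on $\D_n/Y$), $\sg{Y,x}=\D_n$, and $Y$ is core-free. Identifying $K$ with $D_n$ via $(r,s)\mapsto a$ and $(s,1)\mapsto b$, a direct evaluation of the $Y$-orbit of $x$ and of the rotation $p=(x,y(x),y^2(x),\ldots)$ produces $X=a\sg{a^2}\cup b\sg{a^2}$ and the cycle $(b,a^m,a^2b,a^{m+2},a^4b,\ldots)$. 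Finally, the automorphism of $D_n$ fixing $a$ and sending $b$ to $a^{m-1}b$ preserves $X$ and conjugates this cycle to $p=(b,a,a^2b,a^3,\ldots,a^{n-2}b,a^{n-1})$, yielding $\M\equiv\cm(D_n,a\sg{a^2}\cup b\sg{a^2},p)$.

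The hard part is the uniqueness hidden in the phrase ``up to $\aut(\D_n)$'': I must show that every admissible quadruple delivered by Lemma~\ref{aq2} falls into the single orbit of $(K,x,y)$ above. This breaks into (a) proving that the regular dihedral subgroups $K\cong D_{2m}$ of order $4m$ inside $D_m\times D_m$ form one $\aut(\D_n)$-orbit; (b) for fixed $K$, classifying the core-free cyclic complements $Y$ of order $2m$ up to the stabiliser $N_{\aut(\D_n)}(K)$, using that $Y$ must meet the nontrivial coset of $D_m\times D_m$; and (c) reducing the admissible involution $x$ to one choice. The identification $\aut(\D_n)\cong\aut(D_m)\wr\Z_2$ (valid since $D_m$ is centreless and directly indecomposable for odd $m\ge 3$) organises these steps, but verifying core-freeness of $Y$ and the generation $\sg{Y,x}=\D_n$ for each candidate, and discarding the remaining possibilities, is the most delicate and computation-intensive part of the argument.
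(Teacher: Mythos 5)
Your overall strategy coincides with the paper's: apply Theorem~\ref{KMM} to land in case (5) with $G\cong\D_n$ and $L(D_n)$ inside $D_m\times D_m$, pass to an admissible quadruple via Lemma~\ref{aq2}, and normalise the quadruple using Lemma~\ref{aq1}(2). Your explicit model quadruple is admissible, the resulting cycle $(b,a^m,a^2b,a^{m+2},\ldots)$ is what one gets from it, and the normalising automorphism $a\mapsto a$, $b\mapsto a^{m-1}b$ does carry it to the stated $p$ (one checks $\phi p_1=p\phi$ forces $t\equiv 1$ and $u\equiv m-1 \pmod n$). So the computational half of your argument is sound.

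The genuine gap is exactly the part you flag and then leave undone: showing that \emph{every} admissible quadruple $(\D_n,K,x,y)$ produced by Lemma~\ref{aq2} lies in the orbit of your model. That is the substance of the theorem, and announcing steps (a)--(c) is not a proof of them. For comparison, the paper disposes of them as follows. Step (a) is not proved from scratch but imported: \cite[Proposition~3.2]{KMM12} states that any two subgroups of $D_m\times D_m$ isomorphic to $D_n$ are conjugate in $\D_n$, so an inner automorphism moves $K$ to the fixed copy $D=D_m\times\langle r\rangle$. Step (b) does not require classifying complements up to $N_{\aut(\D_n)}(K)$: since $\D_n=DY$ with $D\le D_m\times D_m$, one gets $|Y|=n$ and $|Y\cap(D_m\times D_m)|=m$ odd, so the unique involution $y^m$ lies outside $D_m\times D_m$, is forced to have the form $(d,d^{-1})\sigma$, and is therefore conjugate to $\sigma$ by $(d,1)\in D$ (a conjugation that preserves $D$); after that $y=(c^\ell,c^\ell)\sigma$ with $\gcd(\ell,m)=1$, so the subgroup $Y$ itself is completely determined and the only residual freedom is the exponent $\ell$, which is absorbed at the very end by the automorphism $a\mapsto a^{\ell'}$ of $D_n$ with $(2\ell+m)\ell'\equiv 1\pmod n$. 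Step (c) evaporates: once $\psi_y$ is computed for general $\ell$, the set $X$ must be the \emph{unique} $\langle\psi_y\rangle$-orbit generating $D_n$, so no choice of $x$ has to be reduced at all (and your preliminary assumption that $x$ may be taken to be an involution of the form $a^{2j}b$ quietly presupposes which index-two coset $X$ is, which is part of what is being proved). Without filling in (a)--(c) along these or equivalent lines, your argument establishes only that the stated map occurs, not that it is the only one up to equivalence.
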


\proof The Cayley graph $\cay(D_n,X)$ is $\aut(\M)$-arc-regular, and thus
Theorem~\ref{KMM} is applicable.
This gives that $n=2m,$ $m$ is an odd number,
$\aut(\M) \cong \D_n,$ and the subgroup of $\D_n$ corresponding to $L(D_n)$ is contained in $D_m \times D_m$.
The proof of  Lemma~\ref{aq2} yields that
\begin{equation}\label{eq5}
\M \equiv \cm(\D_n,H,x_1,y_1)
\end{equation}
for some  admissible quadruple $(\D_n,H,x_1,y_1)$ such that $H \cong D_n$ and $H \le D_m \times D_m$. \medskip

Let us fix $c$ as a generator of the cyclic subgroup of $D_m$ of order $m,$ and
an involution $r \in D_m$ such that $r \notin \langle c \rangle$.
Define the subgroup $D \le D_m \times D_m$ as
$$
D = \{ (d,r^{i}) : d \in D_m, i \in \{0,1\}  \}.
$$
Then $D \cong D_m \times \Z_2 \cong D_n$. It was proved in
\cite[Proposition~3.2]{KMM12} that, every two  subgroups of $D_m
\times D_m,$ which are isomorphic to $D_n,$ are conjugate in $\D_n$.
Thus $D = H^g$ for some $g \in \D_n$.
 Apply Lemma~\ref{aq1}(2) to $\cm(\D_n,H,x_1,y_1)$
with letting $\alpha$  be the inner automorphism $\alpha : z \mapsto
z^g, z \in \D_n$. This and \eqref{eq5} imply that
\begin{equation}\label{eq6}
\M \equiv \cm(\D_n,D,x_2,y_2), \;  x_2 = x_1^g  \text{ and } y_2= y_1^g.
\end{equation}

Let $Y_2 = \langle y_2 \rangle$. Recall that $Y_2$ is core-free in
$\D_n,$ $\D_n = D Y_2,$ and $|D \cap Y_2|=1$. Since $\D_n = D Y_2$
and $D \le (D_m \times D_m),$ $Y_2 \not\leq (D_m \times D_m)$. We
obtain that $|Y_2| = |\D_n : D|=n,$ and $|Y_2 \cap (D_m \times D_m)|
= |Y_2|/2 = m$. Since $m$ is odd, the unique  involution $y_2^m$ in
$Y_2$ is in the form  $y_2^m=(d,d')\sigma$ for some $(d,d') \in D_m
\times D_m$.  Then $((d,d')\sigma)^2=y_2^{2m}=1_G$, hence
$d'=d^{-1}$. This gives  $y_2^m= (d,1)\sigma(d,1)^{-1},$ i.e.,
$y_2^m$ is conjugate to $\sigma$ by $(d,1) \in D$. This and
\eqref{eq6} imply that
\begin{equation}\label{eq7}
\M \equiv \M(\D_n,D,x,y), \; x = x_2^{(d,1)} \text{ and } y = (y_2)^{(d,1)}.
\end{equation}

Notice that $y^m = \sigma$.  Also, $y=(d_1,d_2)\sigma$ for some
$(d_1,d_2) \in D_m \times D_m$. As $y$ commutes with $\sigma$ and
has order $n$, we find that $d_1=d_2 = c^{\ell}$  for some integer
$\ell$ with $\gcd(\ell,m)=1$. Thus we have
\begin{equation}\label{eq8}
Y = \langle y \rangle = \big\{  (c^i,c^i)\sigma^j : i \in \{0,1,\ldots,m-1\}, j \in \{0,1\} \big\}.
\end{equation}

We compute next the skew-morphism $\psi_y$ of $D$ obtained from the action of $y$ defined in \eqref{eq3}.
We set $a = (c,r)$ and $b = (r,1),$ and so
$D = \langle a,b \mid a^n=b^2=1, bab=a^{-1} \rangle$.
Let $i \in \{0,1,\ldots,m-1\}$. Using \eqref{eq2} and \eqref{eq8}, we may write
\begin{eqnarray*}
y a^{2i} Y  &=&  (c^\ell,c^\ell) \sigma (c^{2i},1) Y =  (c^{-2i},1) (c,c)^{2i+\ell} \sigma Y = (c^{n-2i},1) Y = a^{n-2i} Y, \\
y (a^{2i+1}b) Y &=&  (c^\ell,c^\ell) \sigma (c^{2i+1}r,r) Y =
(c^\ell r,c^{\ell+2i+1}r)\sigma Y = (c^{-2i-1}r,r) Y = a^{n-2i-1}b Y, \\
y a^{2i+1} Y     &=& (c^\ell,c^\ell) \sigma (c^{2i+1},r) Y =
(c^\ell r,c^{\ell+2i+1})\sigma Y = (c^{2\ell+2i+1}r,1) Y =
a^{2i+1+2\ell+m}b Y, \\
y (a^{2i}b) Y     &=& (c^\ell,c^\ell) \sigma (c^{2i}r,1) Y =
(c^\ell,c^{\ell+2i}r)\sigma Y = (c^{2\ell+2i},r) Y = a^{2i+2\ell+m} Y.
\end{eqnarray*}
Therefore, $\psi_y$ is given by
$$
\psi_y(a^j) = \begin{cases} a^{n-j} & \mbox{ if $j$ is even} \\  a^{j+2\ell+m}b & \mbox{ if $j$ is odd} \end{cases} \quad
\psi_y(a^jb) = \begin{cases} a^{j+2\ell+m} & \mbox{ if $j$ is even} \\
a^{n-j}b & \mbox{ if $j$ is odd}. \end{cases}
$$
Notice that the set $X = a \langle a^2 \rangle \, \cup \, b \langle
a^2 \rangle$ is the only orbit of $\psi_y$ which generates
$D_n$. This and \eqref{eq7}  imply that $\M \equiv \cm(D,X,p_1),$ where $p_1(z) = \psi_y(z)$ for every $z \in X$. 

Since $\gcd(\ell,m)=1$ and $m$ is odd, it follows  that, $\gcd(2\ell+m,m)=1$. This implies that  $\gcd(2\ell+m,n)=1$ because  $2\ell+m$ is odd.
Thus there is an integer $\ell'$ such that $(2\ell+m) \ell' \equiv 1(\text{mod }n)$.
Define the automorphism $\alpha$ of $D$ by letting $\alpha(a) = a^{\ell'}$ and
$\alpha(b) =b$. Now, it is easily seen that $\alpha$ induces an equivalence from $\cm(D,X,p_1)$ to
$\cm(D,X,p),$ where $p = (b,a,a^2 b,a^3,a^4 b,\ldots,a^{n-2}b,a^{n-1}).$
The theorem is  proved. \QED

\section{The  smallest kernel of regular Cayley maps on $\mathbf{D_n}$}

In this section, we show that every kernel of regular Cayley maps on
$D_n$ is a dihedral subgroup of $D_n$. Furthermore such kernel is of
order at least $4$ except only one regular Cayley maps on $D_3$. We
start with a simple observation.
\medskip

\begin{lem}\label{ker}
Let $\M=\cm(G,X,p)$  be a regular Cayley map with associated skew-morphism $\psi$ and power function $\pi$. Then $\kr(\pi) \cong L(G) \cap L(G)^\psi$.
\end{lem}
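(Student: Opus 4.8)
The plan is to work inside $\aut(\M)=L(G)\langle\psi\rangle$ and convert the arithmetic condition $\pi(g)=1$ into a statement about conjugating $L(G)$ by $\psi$. The only computation I need is the commutation rule between $\psi$ and a left translation. Applying the defining identity $\psi(gh)=\psi(g)\,\psi^{\pi(g)}(h)$ to an arbitrary $h\in G$ gives, for every $g\in G$,
\[
\psi L_g = L_{\psi(g)}\,\psi^{\pi(g)},
\qquad\text{equivalently}\qquad
\psi L_g \psi^{-1} = L_{\psi(g)}\,\psi^{\pi(g)-1}.
\]
This relation (the same one used implicitly in the proof of Lemma~\ref{aq1}) is the engine of the argument: conjugating a translation by $\psi$ returns a translation times a power of $\psi$, and that extra power is trivial exactly when $g$ lies in the kernel.

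To exploit this I would first record that $L(G)\cap\langle\psi\rangle=1$. Indeed $\langle\psi\rangle=\aut(\M)_{1_G}$ is the stabilizer of the base vertex while $L(G)$ acts regularly, so any common element fixes $1_G$ and lies in $L(G)$, forcing it to be $L_{1_G}=\mathrm{id}$. Combined with the displayed formula, $\psi L_g\psi^{-1}\in L(G)$ holds if and only if $\psi^{\pi(g)-1}\in L(G)$, i.e. $\psi^{\pi(g)-1}=\mathrm{id}$; since $\psi$ has order $r=|X|$ and $\pi(g)\in\{1,\dots,r\}$, this forces $\pi(g)=1$, that is, $g\in\kr(\pi)$. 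Moreover in that case the formula collapses to $\psi L_g\psi^{-1}=L_{\psi(g)}$.

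It then remains to read off the intersection. Taking $L(G)^\psi=\psi^{-1}L(G)\psi$ (consistent with the conjugation convention $z^g=g^{-1}zg$ used in the paper), an element $L_k\in L(G)$ lies in $L(G)^\psi$ precisely when $\psi L_k\psi^{-1}\in L(G)$, which by the previous step happens exactly for $k\in\kr(\pi)$. Hence $L(G)\cap L(G)^\psi=\{L_k : k\in\kr(\pi)\}$, which is the image of the subgroup $\kr(\pi)$ (a subgroup by Lemma~\ref{JS-lem}(1)) under the group isomorphism $g\mapsto L_g$ of $G$ onto $L(G)$. Restricting this isomorphism to $\kr(\pi)$ yields $\kr(\pi)\cong L(G)\cap L(G)^\psi$, as claimed.

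I do not expect a genuine obstacle here; the statement is essentially a clean reformulation rather than a hard lemma. The only points demanding care are bookkeeping ones: fixing the conjugation convention for $L(G)^\psi$ consistently, and justifying the implication $\psi^{\pi(g)-1}\in L(G)\Rightarrow\pi(g)=1$, which relies simultaneously on $L(G)\cap\langle\psi\rangle=1$ and on the normalization $1\le\pi(g)\le r$, so that no nonzero exponent strictly below $r$ can make $\psi^{\pi(g)-1}$ trivial.
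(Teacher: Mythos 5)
Your proposal is correct and follows essentially the same route as the paper: both use the commutation rule $\psi L_g=L_{\psi(g)}\psi^{\pi(g)}$ to show $g\in\kr(\pi)\iff\psi L_g\psi^{-1}\in L(G)\iff L_g\in L(G)^\psi$, and then transport $\kr(\pi)$ through the isomorphism $g\mapsto L_g$. Your only addition is to spell out the step $\psi^{\pi(g)-1}\in L(G)\Rightarrow\pi(g)=1$ via $L(G)\cap\langle\psi\rangle=1$, which the paper leaves implicit.
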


\proof Let $L$ be the isomorphism $L : G \to L(G)$ defined by $L : g
\mapsto L_g$. Since $\psi$ is a skew-morphism with power function
$\pi,$ $\psi L_g = L_{\psi(g)} \psi^{\pi(g)}$ holds in $\aut(\M)$
for every $g \in G$.  The  equivalences follow:
$$
g \in \kr(\pi) \iff  L_g^{\psi^{-1}} = \psi L_g \psi^{-1} \in L(G) \iff L_g \in L(G)^\psi.
$$
Therefore, $L$ maps the group $\kr(\pi)$ onto $L(G) \cap L(G)^\psi$. The lemma is proved. \QED

\begin{cor}\label{cor}
Let $\M=\cm(D_n,X,p)$  be a regular Cayley map with associated skew-morphism $\psi$ and power function $\pi$ and let $M=C_n \cap \kr(\pi)$.
If $|M| > 2,$ then $L(M)$ is normal in $\aut(\M)$.
\end{cor}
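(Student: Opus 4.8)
The plan is to exploit the factorization $\aut(\M) = L(D_n)\,\sg{\psi}$ recorded after Lemma~\ref{JS-lem}: since $\aut(\M)$ is generated by $L(D_n)$ together with the associated skew-morphism $\psi$, it suffices to check that $L(M)$ is normalized by each of these two pieces. Normality under $L(D_n)$ is the routine half. Indeed $M = C_n \cap \kr(\pi)$ is a subgroup of the cyclic group $C_n$, hence it is the unique subgroup of $C_n$ of its own order and is therefore characteristic in $C_n$; as $C_n$ is normal in $D_n$, this makes $M$ normal in $D_n$, and consequently $L(M)$ normal in $L(D_n)$.

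The substance of the proof is to show that $\psi$ normalizes $L(M)$. Here I would first recall, from the computation in the proof of Lemma~\ref{ker}, that $\psi L_g = L_{\psi(g)}\psi^{\pi(g)}$ for every $g$; restricting to $g \in \kr(\pi)$, where $\pi(g)=1$, this collapses to $\psi L_g \psi^{-1} = L_{\psi(g)}$. Hence $L(M)^\psi = \{\, L_{\psi(g)} : g \in M\,\} = L(\psi(M))$, and the whole matter reduces to proving the set equality $\psi(M) = M$. To obtain it, I would use that $\pi(g)=1$ for $g \in \kr(\pi)$ turns the defining skew-morphism identity into $\psi(gh) = \psi(g)\psi(h)$ for all $h$; iterating gives $\psi(g^k)=\psi(g)^k$, so the restriction of $\psi$ to the cyclic group $M$ is an injective homomorphism. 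Thus, writing $d = |M|$ and letting $c$ be a generator of $M$, the image $\psi(M) = \sg{\psi(c)}$ is again cyclic of order $d$.

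The main obstacle --- and the only place the hypothesis $|M|>2$ is used --- is to pin $\psi(M)$ down inside $C_n$. The key observation is that in $D_n$ every element of order greater than $2$ is a rotation, i.e.\ lies in $C_n$. Since $d=|M|>2$, the generator $c$ and its image $\psi(c)$ both have order $d>2$, forcing $\psi(c) \in C_n$ and hence $\psi(M) \le C_n$. But $C_n$ is cyclic, so it contains a unique subgroup of order $d$, which is precisely $M$; therefore $\psi(M)=M$, as required. Combining this with $L(M)^\psi = L(\psi(M))$ shows that $\psi$ normalizes $L(M)$, and together with the first paragraph we conclude that $L(M)$ is normal in $\langle L(D_n), \psi\rangle = \aut(\M)$. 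I expect the delicate points to be the clean reduction $L(M)^\psi = L(\psi(M))$ via $\pi \equiv 1$ on $\kr(\pi)$, together with the order argument; note that if $|M|\le 2$ the image $\psi(c)$ could be a reflection and the containment $\psi(M)\le C_n$ would break down.
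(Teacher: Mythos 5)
Your proposal is correct and follows essentially the same route as the paper: both arguments rest on the identity $\psi L_g\psi^{-1}=L_{\psi(g)}$ for $g\in\kr(\pi)$ (the content of Lemma~\ref{ker}) to see that conjugation by $\psi$ carries $L(M)$ to the image of a cyclic subgroup of order $|M|$, and then invoke the uniqueness of the cyclic subgroup of order $|M|>2$ in a dihedral group to conclude $L(M)^\psi=L(M)$, whence normality in $\langle L(D_n),\psi\rangle=\aut(\M)$. Your extra step verifying directly that $\psi(M)$ is a subgroup (via $\pi\equiv 1$ on $\kr(\pi)$) is obtained in the paper for free, since $L(M)^{\psi^{-1}}$ is a conjugate of a group; otherwise the two proofs coincide.
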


\begin{proof}
Let $L$ be the isomorphism $L : G \to L(G)$ defined by $L : g \mapsto L_g$.
We have proved above that $L$ maps $M$ to $L(D_n) \cap L(D_n)^\psi$.
Thus $L(M) \le L(D_n)^\psi,$ and hence $L(M)^{\psi^{-1}} \le L(D_n)$.
Since $|M| > 2,$ $L(M)$ is the unique cyclic subgroup of $L(D_n)$ of order $|M|,$
and we conclude that $L(M)^\psi=L(M)$. Therefore, $L(M)$ is normal
in $\langle L(D_n),\psi \rangle = \aut(\M)$. \QED
\end{proof}

\begin{thm}\label{min}
Let $n \ge 2,$ and $\M=\cm(D_n,X,p)$  be a regular Cayley map with
associated skew-morphism $\psi$ and power function $\pi$. Now
\begin{enumerate}[(1)]
\item $\kr(\pi)$ is a dihedral subgroup of $D_n$.
\item Either $\M$ is the
embedding of the octahedron into the sphere and $|\kr(\pi)| = 2,$ or
$|\kr(\pi)| \ge 4$.
\end{enumerate}
\end{thm}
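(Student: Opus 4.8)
My plan is to first establish part (1), that $\kr(\pi)$ is a dihedral subgroup, and then use the core-free machinery of Section~3 to pin down the exceptional small cases for part (2). Let me set $K = \kr(\pi)$ and $M = C_n \cap K$, following the notation of Corollary~\ref{cor}. By Lemma~\ref{JS-lem}(1), $K$ is a subgroup of $D_n$, so the only possibilities are that $K$ is either cyclic (contained in some conjugate of $C_n$) or dihedral. For part (1), the first thing I would do is rule out the purely cyclic case. The natural approach is to exploit the relation from Lemma~\ref{ker}, namely $K \cong L(D_n) \cap L(D_n)^\psi$, together with the fact that $K$ is invariant under the skew-morphism $\psi$ in a suitable sense. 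A subgroup of $D_n$ is dihedral precisely when it contains a reflection, so I would aim to show $K$ must contain some element of the form $a^j b$. If $K \le C_n$ were cyclic and nontrivial, then $M = K$ with $|M| \ge 2$; for $|M| > 2$ Corollary~\ref{cor} gives that $L(M) \trianglelefteq \aut(\M)$, which contradicts the requirement (in the relevant regime) that cyclic subgroups of order $n$ be involved, but more directly I would argue that the reflections of $D_n$ cannot all lie outside $K$ by a counting/parity argument on the power function using Lemma~\ref{JS-lem}(2), whose cosets of $K$ partition $D_n$.

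For part (2), the key idea is a dichotomy based on whether the unique cyclic subgroup $C_n$ of order $n$ (equivalently $L(C_n)$) is core-free in $\aut(\M)$. I would split into two cases according to the size of $M = C_n \cap K$. If $|M| > 2$, then by Corollary~\ref{cor}, $L(M)$ is normal in $\aut(\M)$, so $C_n$ is not core-free; in this situation $\kr(\pi) \supseteq M$ already forces $|\kr(\pi)| \ge |M| \ge 3$, and since $K$ is dihedral by part (1), containing a cyclic subgroup of order $\ge 3$ together with a reflection yields $|K| \ge 2|M| \ge 6 > 4$, so certainly $|\kr(\pi)| \ge 4$. If $|M| \le 2$, I would want to invoke Theorem~\ref{c-free} or Theorem~\ref{KMM} directly: I would show that when $M$ is small the hypothesis that $C_n$ is core-free becomes available (for $n$ large), which via Theorem~\ref{KMM} restricts the underlying graph and group to the listed sporadic cases ($K_2, K_4, K_{2,2,2}, Q_3, K_{n,n}$). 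The octahedron embedding with $|\kr(\pi)|=2$ should emerge precisely from the $K_{2,2,2}$ case with $n=3$, and I would need to verify that in every other case on that list the kernel has order at least $4$.

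The main obstacle I anticipate is the borderline small-$n$ analysis, specifically separating the genuine octahedron exception from the other configurations in Theorem~\ref{KMM}, and handling the case $|M| \le 2$ when $C_n$ fails to be core-free for small $n$ (where Theorem~\ref{c-free} does not apply since it requires $n \ge 5$). For these I would compute $\kr(\pi)$ explicitly from the skew-morphism in each of the finitely many small cases $n \in \{2,3,4\}$, using the distribution formula \eqref{eq1} and Lemma~\ref{JS-lem}(3) to read off where $\pi$ takes the value $1$. The delicate point is ensuring that $|M| \le 2$ combined with $K$ dihedral genuinely forces $|K| \ge 4$ except for the octahedron: since a dihedral kernel containing a reflection and having $|M| \le 2$ could a priori have order as small as $2$, I must show that the only regular Cayley map on $D_n$ realizing $|K|=2$ is the octahedron embedding, which is exactly the $K_{2,2,2}$, $G \cong S_4$, $n=3$ entry of Theorem~\ref{KMM}. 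Verifying that this entry indeed yields $|\kr(\pi)|=2$ and that no other entry does is the crux, and I expect it to require a direct examination of the associated skew-morphism in that single case.
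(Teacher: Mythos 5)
There is a genuine gap, and it occurs at the same place in both halves of your plan: you never actually produce a reflection inside $\kr(\pi)$, and your proposed reduction to the core-free case is false. For part (1), your ``counting/parity argument on the power function'' for why the reflections cannot all avoid $\kr(\pi)$ is not an argument but a placeholder; nothing in Lemmas~\ref{ker}, \ref{JS-lem} or Corollary~\ref{cor} alone forces $\kr(\pi)\cap(D_n\setminus C_n)\neq\emptyset$, and this is precisely the hard content of the theorem. For part (2), the claim that ``when $M=C_n\cap\kr(\pi)$ is small the hypothesis that $C_n$ is core-free becomes available'' is wrong: since the core $N$ of $L(C_n)$ satisfies $N\le M$, the bound $|M|\le 2$ only gives $|N|\le 2$, and $N$ may well have order $2$. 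Indeed the infinite families in Theorem~\ref{main}(4) (for $m$ even) and (5) have $|\kr(\pi)|=4$, $|M|=2$ and $|N|=2$ (Lemmas~\ref{map1}(2) and \ref{map2}(2)), so Theorem~\ref{KMM} is simply not applicable to $\M$ itself in that regime, and your case analysis never closes.

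The missing idea is the quotient machinery of Section~2.3. The paper's proof takes the largest $H\le C_n$ containing $N$ such that $D_n/H$ is a block system for $\aut(\M)$; if $H=C_n$ then $\kr(\pi)=D_n$ and everything is trivial, and otherwise $\M/H$ is a regular Cayley map on the dihedral group $D_n/H$ in which $L(C_n/H)$ \emph{is} core-free, so Theorem~\ref{KMM} applies to the quotient. In each of its cases (2)--(5) one then locates either a reflection $x\in X$ with $\psi(x)$ again a reflection (giving $\pi(x)=1$ via \eqref{eq1}), or a rotation $x$ and a reflection $\psi^{-1}(x^{-1})$ with $\pi(x)=\pi(\psi^{-1}(x^{-1}))$, so that Lemma~\ref{JS-lem}(2) places a reflection in $\kr(\pi)$. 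Once a reflection is in $\kr(\pi)$, part (1) follows, and since $N\le\kr(\pi)$ with $|N|\ge 2$ one gets $|\kr(\pi)|\ge 2|N|\ge 4$ for free --- note that your estimate $|K|\ge 2|M|\ge 6$ in the case $|M|>2$ already presupposes part (1), so it cannot be run independently of this. Your treatment of the genuinely core-free case (checking the entries of Theorem~\ref{KMM}, isolating the octahedron at $n=3$) is sound and matches the paper, but without the quotient step the proof does not go through.
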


\proof   Let $N$ be the subgroup of $C_n$ such that $L(N)$ is the
core of $L(C_n)$ in $\aut(\M)$. Notice that $N \le \kr(\pi)$. If $N$
is trivial, then the results hold by Theorem \ref{KMM}. So assume
that $N$ is non-trivial, namely $|N| \ge 2$. Since  $N \le
\kr(\pi)$, it suffices to show that $\kr(\pi)$ is a dihedral
subgroup of $D_n$, namely $\kr(\pi) \cap (D_n \setminus C_n) \neq
\emptyset$.

Let us consider the  largest subgroup $H \le C_n$ containing $N$
such that $D_n/H$ is a block system for $\aut(\M)$.  Now if $H=C_n,$
then $L(D_n)$ is normal in $\aut(\M),$ $\kr(\pi) = D_n,$ which prove
the results. Hence assume that $H < C_n$. We set $\Gamma =
\cay(D_n,X)$ and $\Gamma/H = \cay(D_n/H,X/H)$.

Recall that $\Gamma/H$  is the underlying graph  of the quotient map
$\M/H$ (see 2.3 and Lemma~\ref{quo}). Now $\M/H$ is a regular Cayley
map on the dihedral group $D_n/H,$ and $\Gamma/H$ is
$\aut(\M/H)$-arc-regular. Since $\aut(\M/H) = \aut(\M)^{D_n/H},$ it
follows by the maximality of $H$ that the core of $L(C_n/H)$ is
trivial in $\aut(\M/H)$. Theorem~\ref{KMM} is applicable, in
particular, $\Gamma/H$ and $\aut(\M/H)$ are described in one of
cases (2)-(5) of Theorem~\ref{KMM}.

If $\Gamma/H$ and $\aut(\M/H)$ correspond to  (2) or (3) in
Theorem~\ref{KMM}, then there exists a $x \in X \cap (D_n \setminus
C_n)$ such that $\psi(x) \in D_n \setminus C_n$. This means that $x
\in \kr(\pi)$, and hence $\kr(\pi)$ is a dihedral subgroup of $D_n$.

Suppose that $\Gamma/H$ and $\aut(\M/H)$ correspond to  (4) or (5)
in Theorem~\ref{KMM}. Then there exists $x \in X \cap C_n$ such that
$\psi(x), \psi^{-1}(x^{-1}) \in X \cap (D_n \setminus C_n)$. If
$p^i(x^{-1})=x$, then $\pi(x)=\pi(\psi^{-1}(x^{-1}))=i+1$. Since $x
\in C_n$ and $\psi^{-1}(x^{-1}) \in D_n \setminus C_n$, $\kr(\pi)$
is a dihedral subgroup of $D_n$ by Lemma \ref{JS-lem}(2). This
completes the proof of the theorem. \QED

We study next the Cayley maps defined in cases (4) and (5) of Theorem~\ref{main}, respectively.

\begin{lem}\label{map1}
Let $n=2m,$ $n \ge 6,$ $\M = \cm(D_n,a \langle a^2 \rangle \, \cup \, b \langle a^2 \rangle, p),$
where  $p = (b,a,a^2 b,a^3,$ $a^4 b,\ldots,a^{n-2}b,a^{n-1})$. Then the following hold:
\begin{enumerate}[(1)]
\item $\M$ is regular, and its associated skew-morphism $\psi$ is given by
$$
\psi(a^j) = \begin{cases} a^{-j} & \mbox{ if $j$ is even} \\  a^{j+1}b & \mbox{ if $j$ is odd} \end{cases} \quad
\psi(a^jb) = \begin{cases} a^{j+1} & \mbox{ if $j$ is even} \\  a^{-j}b & \mbox{ if $j$ is odd.} \end{cases}
$$
\item Let $N$ be the subgroup of $C_n$ such that $L(N)$ is the core of $L(C_n)$ in $\aut(\M)$. Then
$|N| \le 2,$ and $|N|=1$ if and only if $m$ is odd.
\item The associated power function $\pi$ has kernel $\kr(\pi) \cong \Z_2^2$.
\item Let $\Pi = \{ \pi(g) : g \in D_n\}$. Now $\Pi = \{2i+1 : i \in \{0,1,\ldots,m-1\} \}$. If $4 \mid n,$ then
\begin{equation}\label{eq9}
\pi(x) \equiv -1(\text{mod }4) \iff x \in a \langle a^2 \rangle \, \cup \, b \langle a^2 \rangle.
\end{equation}
\end{enumerate}
\end{lem}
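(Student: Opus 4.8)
The plan is to verify each of the four claims in turn, working directly from the explicit cyclic permutation $p$ that defines $\M$. First I would establish regularity and compute the skew-morphism $\psi$ in part~(1). The natural approach is to guess that the map $\psi$ displayed in the statement is a skew-morphism of $D_n$ and then invoke Theorem~\ref{JS}: it suffices to check that $\psi$ restricted to $X = a\sg{a^2}\cup b\sg{a^2}$ agrees with $p$, and that $\psi$ is indeed a skew-morphism. To confirm the skew-morphism property one checks $\psi(1_{D_n})=1_{D_n}$ (immediate, as $\psi(a^0)=a^0$) and then verifies the identity $\psi(gh)=\psi(g)\psi^{\pi(g)}(h)$ for a suitable power function $\pi$. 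A cleaner route, given the machinery already set up, is to observe that the formula for $\psi$ here is exactly the $\ell=0$, $m$ even specialization (up to the automorphism $\alpha$) of the skew-morphism $\psi_y$ computed in the proof of Theorem~\ref{c-free}; so I would either cite that computation or redo the short coset calculation to read off that $\psi$ is a skew-morphism whose unique generating orbit is $X$ with the prescribed rotation $p$.

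Next, for part~(3) I would compute the power function $\pi$ explicitly and identify $\kr(\pi)$. Using formula~\eqref{eq1}, $\pi(x)\equiv\chi(\psi(x))-\chi(x)+1\pmod{|X|}$, where $\chi(x)$ is the least non-negative integer with $p^{\chi(x)}(x)=x^{-1}$; since $|X|=n$ and $p$ is explicit, I can read off $\chi$ and hence $\pi$ on each element of $X$. The kernel $\kr(\pi)$ is a subgroup by Lemma~\ref{JS-lem}(1), and by Lemma~\ref{JS-lem}(2) it is determined as the set of elements on which $\pi=1$, i.e.\ those lying in the same right coset of $\kr(\pi)$ as $1_{D_n}$. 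I expect the computation to show that $\pi=1$ precisely on $\{1, a^{?}, b, a^{?}b\}$ forming a Klein four-group; to nail this down I would use Lemma~\ref{JS-lem}(3) to extend $\pi$ from $X$ to all of $D_n$ multiplicatively, which simultaneously feeds into part~(4). Since $\kr(\pi)$ must be dihedral of order~$4$ by Theorem~\ref{min}(1) and order considerations, and a dihedral group of order~$4$ is $\Z_2^2$, this identifies $\kr(\pi)\cong\Z_2^2$.

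For part~(2), I would determine the core $L(N)$ of $L(C_n)$ in $\aut(\M)$. Here $N\le\kr(\pi)\cap C_n$, and from part~(3) the intersection $\kr(\pi)\cap C_n$ has order at most~$2$, so $|N|\le 2$. The remaining content is the parity criterion: $|N|=1$ iff $m$ is odd. I would argue this by noting that $L(N)$ is normal in $\aut(\M)=\langle L(D_n),\psi\rangle$, so $N$ must be $\psi$-invariant; applying the explicit formula for $\psi$ (which sends even powers $a^j\mapsto a^{-j}$ and odd powers into $D_n\setminus C_n$), the only candidate nontrivial $N$ is generated by $a^m$ (the unique involution in $C_n$), and $\sg{a^m}$ is $\psi$-invariant precisely when $m$ is even, since $\psi(a^m)=a^{-m}=a^m$ requires $m$ even for $a^m$ to be an even power in the first place. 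This matches the claim.

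Finally, part~(4) computes the value set $\Pi$ and, when $4\mid n$, the congruence~\eqref{eq9}. The image $\Pi$ should fall out of the explicit $\pi$ computed in part~(3) together with the cocycle relation Lemma~\ref{JS-lem}(3), which propagates the values of $\pi$ off $X$; I expect to find $\pi$ takes exactly the odd values $1,3,\dots,2m-1$, so $\Pi=\{2i+1: 0\le i\le m-1\}$. For the congruence mod~$4$, I would track the residue of $\pi(x)$ modulo~$4$ as $x$ ranges over $X$ versus its complement, using that when $4\mid n$ the order $r$ of $\psi$ and the structure of $X$ interact so that $\pi\equiv -1\pmod 4$ exactly detects membership in $a\sg{a^2}\cup b\sg{a^2}$. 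I anticipate the main obstacle to be part~(4): unlike the finite checks of parts~(1)--(3), establishing the mod-$4$ characterization requires carefully bookkeeping the $\chi$-values and the summation in Lemma~\ref{JS-lem}(3) across both cosets, and getting the residues to sort cleanly into the two classes is where the argument is most delicate. The other parts are essentially direct, if slightly tedious, computations with the explicit $\psi$ and $p$.
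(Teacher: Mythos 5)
Your parts (2)--(4) follow essentially the paper's route: the paper likewise computes $\pi$ on $X$ from \eqref{eq1} via $\chi$, propagates it with Lemma~\ref{JS-lem}(3) (getting $\pi(a^{2i}b)\equiv n-4i-1$, $\pi(a^{2i+1})\equiv 4i+3$, $\pi(a^{2j+2})\equiv 4j+5 \pmod n$), bounds $|\kr(\pi)\cap C_n|\le 2$ by a core argument, and exhibits $\{1,a^m,ba,ba^{m+1}\}$ inside the kernel. Two small cautions there: the kernel does not contain $b$ (indeed $\pi(b)\equiv n-1$), and in part (2) the $\psi$-invariance of the set $\{1,a^m\}$ is necessary but not by itself sufficient for normality of $L(\langle a^m\rangle)$ in $\aut(\M)$ --- you need the full commutation $L_{a^m}\psi=\psi L_{a^m}$ (equivalently $\psi(a^m)=a^m$ \emph{and} $\pi(a^m)=1$), which for $m$ even is a short direct check from the displayed formula, as the paper does. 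Also note the paper proves (2) before (3) and uses (2) to bound the kernel, whereas you reverse the order; your order is consistent provided your part (3) really is a self-contained computation.

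The genuine gap is in part (1). Your ``cleaner route'' --- reading $\psi$ off as the $\ell=0$ specialization of the coset computation in the proof of Theorem~\ref{c-free} --- is only available when $m$ is odd: the group $\D_n=(D_m\times D_m)\rtimes\langle\sigma\rangle$ and the whole admissible-quadruple construction there are set up exclusively for $n=2m$ with $m$ odd (case (5) of Theorem~\ref{KMM}), and by your own part (2) the case $m$ even is precisely the one where $L(C_n)$ has nontrivial core, so Theorem~\ref{KMM} and hence Theorem~\ref{c-free} cannot apply and $\aut(\M)$ is not $\D_n$. For $m$ even you are left with your first option, verifying $\psi(gh)=\psi(g)\psi^{\pi(g)}(h)$ ``for a suitable $\pi$'', which is circular-flavoured (you must already know $\pi$ on all of $D_n$ to state the identity) and is not carried out. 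The paper avoids both problems with Remark~\ref{rem1}: since $\psi(1)=1$, it suffices to show $\psi L(D_n)\subseteq L(D_n)\langle\psi\rangle$, which reduces to the two generator identities $L_{a^2b}\psi L_a\psi^{-1}=\psi^2$ and $L_{a^{-1}}\psi L_b\psi^{-1}=\psi^{-2}$ and works uniformly in the parity of $m$. You should either adopt that criterion or explicitly restrict the $\D_n$ shortcut to $m$ odd and supply a separate argument for $m$ even.
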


\proof We consider  all points step by step. \medskip

\noindent (1): By Remark~\ref{rem1}, it is sufficient to show that $\psi L(D_n) \subseteq  L(D_n) \langle \psi \rangle$.

It can be checked by definition that $\psi^2$  is
 skew-morphism of $D_n$ whose power function takes $1$ on all $x  \in \langle a^2,b \rangle,$ and $-1$ on the
remaining elements. Hence $\psi^2$ acts on $\langle a^2,ab \rangle$
as the identical permutation,  and on the rest as the left
multiplication $L_{a^2}$.  Direct computations yield
$$
L_{a^2b} \psi L_a \psi^{-1} = \psi^2 \text{ and }  L_{a^{-1}} \psi L_b \psi^{-1} = \psi^{-2}.
$$
Now we may write $\psi L_{a^j} \psi^{-1} = (\psi L_a \psi^{-1})^j =
(L_{a^2b} \psi^2)^j \in L(D_n) \langle \psi^2 \rangle,$ and thus
$\psi L_{a^j} \in L(D_n) \langle \psi^2 \rangle \psi \subseteq
L(D_n) \langle \psi \rangle$. Similarly, $\psi L_{a^j b} \psi^{-1} =
(\psi L_{a^j} \psi^{-1}) (\psi L_b \psi^{-1}) =
 (L_{a^2b} \psi^2)^j L_a \psi^{-2} \in L(D_n) \langle \psi^2 \rangle,$
and so $\psi L_{a^j b} \in L(D_n) \langle \psi^2 \rangle \psi \subseteq L(D_n) \langle \psi \rangle$. \medskip

\noindent (2): Suppose for the moment that $m$ is even. It is then
easily seen that $L_{a^m} \psi = \psi L_{a^m},$ and thus $L_{a^m}
\in L(N)$. By this observation it suffices to prove that, if  $|N|
\ge 2,$ then $m$ is even and $|N|=2$.

Let $L(N)$ be generated by $L_{a^k}$ where $k$ is a divisor of
$n=2m$. Now $(L_{a^k})^\psi \in L(N)$ and $(L_{a^k})^\psi(1_H) =
(\psi^{-1} L_{a^k} \psi)(1_H) = \psi^{-1}(a^k)$. This  implies in
turn that, $k$ is even, $(L_{a^k})^\psi(1_H) = a^{-k},$ and so
$(L_{a^k})^\psi = L_{a^{-k}}$. Then $a^{-k}b = L_{a^{-k}}(b) =
(L_{a^k})^\psi(b)=a^kb$. Thus $k=m$ is even, and so $|N| = 2$.
\medskip

\noindent (3): Let $M = C_n \cap \kr(\pi)$. By Lemma~\ref{ker},
$L(M)^\psi$ is contained in $L(D_n)$. If $|M| > 2,$ then $L(M)^\psi
= L(M)$ follows  by \eqref{eq1}, hence $M \le N$.  This gives $|N| >
2,$ which is a contradiction with (2). Therefore, $|M| \le 2,$ and
so $|\kr(\pi)| \le 4$. We finish the proof of (3) by showing that
$\{1,a^m,ba,ba^{m+1}\} \le \kr(\pi)$.

For  $x \in a \langle a^2 \rangle \, \cup \, b \langle a^2 \rangle,$ let
$\chi(x)$ be the smallest non-negative integer such that $p^{\chi(x)}(x) = x^{-1}$.
Then by \eqref{eq1}, $x \in \kr(\pi)$ if and only if $\chi(\psi(x)) = \chi(x)$.
This shows that $a^m$ and $a^{m-1}b = ba^{m+1}$ are in $\kr(\pi)$ if $m$ is odd, and thus so are
$1$ and $ba$.

Let $m$ be even.  Then $L_{a^m} \psi = \psi L_{a^m},$ hence $a^m \in \kr(\pi)$.
It remain to show that $ba \in \kr(\pi)$. Now
$$
a =\psi(b) = \psi(ba a^{-1}) = \psi(ba) \psi^{\pi(ba)}(a^{-1}) = ab \psi^{\pi(ba)}(a^{-1}).
$$
From this, $\psi^{\pi(ba)}(a^{-1}) = b,$ and so $\pi(ba) =1$.
\medskip

\noindent (4): Note that $\kr(\pi) = \{1,a^m,ba,ba^{m+1}\}$. Since
for any $x \in X$, $\pi(x) \equiv \chi(\psi(x)) - \chi(x) +
1(\text{mod } n)$, one can check $\pi(a^{2i}b) \equiv n-4i-1
(\text{mod } n)$ and $\pi(a^{2i+1}) \equiv 4i+3 (\text{mod } n)$ for
any $i=0,\ldots,m-1$. Furthermore for any $j=0,\ldots, m-1$,
$$
\pi(a^{2j+2}) = \pi(a\cdot a^{2j+1}) \equiv
\pi(a^{2j+1})+\pi(\psi(a^{2j+1}))+\pi(\psi^2(a^{2j+1})) \equiv 4j+5
 (\text{mod } n).$$
Therefore, $\Pi = \{2i+1 : i \in \{0,1,\ldots,m-1\} \}$ and if $4 \mid n,$ then
$$ \pi(x) \equiv -1(\text{mod }4) \iff x \in a \langle a^2 \rangle \, \cup \, b \langle a^2 \rangle.
$$
\QED

The next  lemma can be derived  in the similar way as above, and
hence its proof is omitted.

\begin{lem}\label{map2}
Let  $n=2m,$ $8 \mid n,$ $\M = \cm(D_n,a \langle a^2 \rangle \, \cup \, b \langle a^2 \rangle, p),$
where  $p =  (b,a,a^{m+2}b,a^3,$ $a^4 b,\ldots,a^{m-2}b,a^{n-1})$. Then the following hold:
\begin{enumerate}[(1)]
\item $\M$ is regular, and its associated skew-morphism $\psi$ is given by
$$
\psi(a^j) = \begin{cases} a^{\frac{j}{2}m-j} & \mbox{ if $j$ is even} \\  a^{j+1+\frac{j+1}{2}m}b & \mbox{ if $j$ is odd} \end{cases} \quad
\psi(a^jb) = \begin{cases}
a^{j+1+\frac{j}{2}m} & \mbox{ if $j$ is even} \\
a^{\frac{j+1}{2}m-j}b & \mbox{  if $j$ is odd}.
\end{cases}
$$
\item Let $N$ be the subgroup of $C_n$ such that $L(N)$ is the core of $L(C_n)$ in $\aut(\M)$. Then $|N| = 2$.
\item The associated power function $\pi$ has kernel $\kr(\pi) \cong \Z_2^2$.
\item Let $\Pi = \{ \pi(g) : g \in D_n\}$. Then $\Pi = \{2i+1 : i \in \{0,1,\ldots,m-1\} \},$ and
\begin{equation}\label{eq10}
\pi(x) \equiv -1(\text{mod }4) \iff x \in a \langle a^2 \rangle \, \cup \, b \langle a^2 \rangle.
\end{equation}
\end{enumerate}
\end{lem}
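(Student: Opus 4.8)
The plan is to mirror the proof of Lemma~\ref{map1}, the one new feature being that $8\mid n$ forces $4\mid m$, so that $\tfrac{m^2}{2}\equiv 0\pmod n$ and, writing $s=\tfrac{m-2}{2}$, also $s^2\equiv 1\pmod m$; these two facts collapse all the awkward exponents. For part (1) I would invoke Remark~\ref{rem1} and verify the sufficient condition $\psi L(D_n)\subseteq L(D_n)\sg\psi$. Computing $\psi^2$ directly from the displayed formula and using $4\mid m$, one finds that $\psi^2$ fixes $\sg{a^2}$ pointwise and acts on each remaining coset of $\sg{a^2}$ as a translation (by $a^2$ on $a\sg{a^2}$, by $a^{m+2}$ on $b\sg{a^2}$, and by $a^m$ on $ab\sg{a^2}$); in particular conjugation by $\psi^2$ carries every $L_{a^{2k}b}$ and every $L_{a^{2k}}$ back into $L(D_n)$. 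A direct check then yields the two conjugation identities
\[
\psi L_a\psi^{-1}=L_{a^{m+2}b}\,\psi^{2},\qquad \psi L_b\psi^{-1}=L_a\,\psi^{-2}.
\]
Iterating the first identity and using the behaviour of $\psi^2$ just described keeps $\psi L_{a^j}\psi^{-1}\in L(D_n)\sg{\psi^2}$, and combining this with the second identity gives $\psi L_{a^jb}\psi^{-1}\in L(D_n)\sg\psi$. Since every element of $D_n$ is of the form $a^j$ or $a^jb$, this proves the inclusion, so $\psi$ is a skew-morphism and $\M$ is regular; that $\psi$ restricts to $p$ on $X=a\sg{a^2}\cup b\sg{a^2}$ is read off the formula.

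For part (2) I would first establish $|N|\ge 2$ by checking $L_{a^m}\psi=\psi L_{a^m}$, which reduces coset by coset to $\psi(a^m)=a^m$ together with the cancellations coming from $2m\equiv 0$ and $\tfrac{m^2}{2}\equiv 0\pmod n$; hence $L_{a^m}\in\core_{\aut(\M)}(L(C_n))=L(N)$. For $|N|\le 2$ I imitate Lemma~\ref{map1}(2): writing $L(N)=\sg{L_{a^k}}$ with $k\mid n$ and using that the core is $\psi$-invariant, I evaluate $(L_{a^k})^\psi$ at both $1$ and $b$. The value at $1$ equals $\psi^{-1}(a^k)$, which must lie in $C_n$, forcing $k$ even and identifying $(L_{a^k})^\psi=L_{a^s}$ with $\tfrac s2(m-2)\equiv k$; the value at $b$ equals $\psi^{-1}(a^{k+1})=a^{2i}b$ with $i(m+2)\equiv k$, whence $s\equiv 2i$. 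Subtracting the two congruences gives $4i\equiv 0\pmod n$, and because $4\mid m$ this leaves only $i\equiv 0$ or $i\equiv \tfrac m2\pmod m$, i.e.\ $a^k=1$ or $a^k=a^m$. Therefore $|N|\le 2$, and together with $|N|\ge 2$ we get $|N|=2$.

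For part (3) I set $M=C_n\cap\kr(\pi)$. If $|M|>2$ then Corollary~\ref{cor} makes $L(M)$ normal in $\aut(\M)$, so $L(M)\le L(N)$ and $|N|\ge|M|>2$, contradicting part (2); hence $|M|\le 2$, and since $|D_n:C_n|=2$ we get $|\kr(\pi)|\le 4$. It then suffices to exhibit four kernel elements: $\pi(a^m)=1$ follows from $\psi(a^m)=a^m$ and the commuting relation of part~(2), while $\pi(ba)=1$ follows from the computations $\psi(ba)=ab$ and $\psi(a^{-1})=b$, which force $\psi^{\pi(ba)}(a^{-1})=b$. As $a^m$ and $ba$ are commuting involutions we obtain $\kr(\pi)=\{1,a^m,ba,ba^{m+1}\}\cong\Z_2^2$. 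For part (4) I would compute $\pi$ on $X$ via \eqref{eq1}, reading the numbers $\chi$ off the explicit rotation $p$, and then propagate the values to $\sg{a^2}$ and $ab\sg{a^2}$ using Lemma~\ref{JS-lem}(2),(3) exactly as in the closing paragraph of the proof of Lemma~\ref{map1} (here $\pi(a)=3$ governs the three-term sums); collecting the residues gives $\Pi=\{2i+1:0\le i\le m-1\}$ and the stated equivalence modulo $4$.

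I expect the main obstacle to be part (2). In Lemma~\ref{map1} the skew-morphism inverts the even powers of $a$, so the analogous step is immediate; here $\psi$ acts on $\sg{a^2}$ through the non-trivial map $a^{2i}\mapsto a^{i(m-2)}$, so determining $(L_{a^k})^\psi$ and extracting the congruence $4i\equiv 0\pmod n$ genuinely requires the hypothesis $4\mid m$. This is precisely where $8\mid n$ is used, and where the rigid conclusion $|N|=2$ (rather than merely $|N|\le 2$) comes from.
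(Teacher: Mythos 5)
Your proof is correct and is exactly what the paper intends: the paper omits the proof of Lemma~\ref{map2} with the one-line remark that it is derived ``in the similar way'' as Lemma~\ref{map1}, and your argument faithfully carries out that programme, with the conjugation identities $\psi L_a\psi^{-1}=L_{a^{m+2}b}\psi^{2}$ and $\psi L_b\psi^{-1}=L_a\psi^{-2}$, the coset-wise description of $\psi^2$, and the use of $4\mid m$ to reduce $\tfrac{m^2}{2}$ modulo $n$ all checking out. The only place you go beyond the template is the core computation in part (2), and that extra care is warranted and correct.
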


\section{Proof of Theorem~\ref{main}}

In this section we set
\begin{enumerate}[-]
\item $\M = \cm(D_n,X,p)$ is a regular Cayley map;
\item $\psi$ and $\pi$ are the associated skew-morphism and power-function respectively;
\item $N$ is the subgroup of $C_n$ such that $L(N)$ is the core of $L(C_n)$ in
$\aut(\M)$;
\item $n=2m,$  $m \ge 1,$ and $T = \langle a^m \rangle$ is the subgroup of $C_n$ of order
$2$.
\end{enumerate}

\begin{lem}\label{lem1}
Let $n \ge 8,$ $|\kr(\pi)|=4$ and $N$ be non-trivial. Now the
following hold:
\begin{enumerate}[(1)]
\item $N=T$ and $D_n/T$ is a block system for $\aut(\M)$.
\item $|\kr(\pi^{D_n/T})|=4$.
\item $m$ is even.
\end{enumerate}
\end{lem}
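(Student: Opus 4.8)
The plan is to establish the three claims in turn, the substance lying in (2).

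For (1) I would first exploit the structure of $\kr(\pi)$: since $|\kr(\pi)|=4$ and $\kr(\pi)$ is dihedral by Theorem~\ref{min}(1), we have $\kr(\pi)\cong\Z_2^2$. Put $M=C_n\cap\kr(\pi)$. Then $M$ is cyclic, being a subgroup of $C_n$, and it sits inside $\Z_2^2$, so $|M|\le 2$. On the other hand $N$ is a subgroup of $C_n$ with $N\le\kr(\pi)$ (as recalled in the proof of Theorem~\ref{min}, since $L(N)=\core_{\aut(\M)}(L(C_n))$ is $\psi$-invariant and Lemma~\ref{ker} applies), hence $N\le M$; as $N$ is non-trivial this forces $|N|=|M|=2$. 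Because $T=\sg{a^m}$ is the unique subgroup of order $2$ of the cyclic group $C_n$ of even order $n=2m$, we conclude $N=T$. Finally $L(T)=L(N)$ is normal in $\aut(\M)$ and $T$ is central in $D_n$, so the $T$-cosets are exactly the orbits of $L(T)$ and hence form a block system; that is, $D_n/T$ is a block system for $\aut(\M)$.

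For (2) I would pass to the regular quotient map $\M/T=\cm(D_n/T,X/T,p^{D_n/T})$ furnished by Lemma~\ref{quo}(1), and write $r=|\sg{\psi}|$ and $r'=\big|\sg{\psi^{D_n/T}}\big|$, so that $r\le 2r'$ by Lemma~\ref{quo}(4). By Lemma~\ref{quo}(5) a coset $Tg$ lies in $\kr(\pi^{D_n/T})$ exactly when $\pi(g)\equiv 1\pmod{r'}$. Since $\pi$ takes its values in $\{1,\dots,r\}\subseteq\{1,\dots,2r'\}$, at most two of them (namely $1$ and possibly $1+r'$) are congruent to $1$ modulo $r'$; and since the fibres of $\pi$ are precisely the right cosets of $\kr(\pi)$ by Lemma~\ref{JS-lem}(2), each of size $4$, the preimage in $D_n$ of $\kr(\pi^{D_n/T})$ is a union of at most two such cosets, hence has size at most $8$. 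As this preimage is $T$-invariant, $|\kr(\pi^{D_n/T})|\le 8/|T|=4$. For the reverse inequality I would apply Theorem~\ref{min}(2) to the regular Cayley map $\M/T$ on the dihedral group $D_n/T\cong D_m$, which has $n$ vertices: the exceptional octahedron case has only $6$ vertices and is thus excluded by $n\ge 8$, so $|\kr(\pi^{D_n/T})|\ge 4$. Combining the two bounds gives $|\kr(\pi^{D_n/T})|=4$.

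For (3), Theorem~\ref{min}(1) applied to $\M/T$ shows that $\kr(\pi^{D_n/T})$ is a dihedral subgroup of $D_n/T\cong D_m$ of order $4$, i.e.\ a copy of $\Z_2^2$. A dihedral group $D_m$ contains a Klein four-subgroup only when its rotation subgroup has an involution: for odd $m$ every involution of $D_m$ is a reflection and the product of two distinct reflections is a non-trivial rotation, of odd order, so no $\Z_2^2$ can occur. Therefore $m$ must be even.

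The step I expect to be the main obstacle is the lower bound in (2): showing that $|\kr(\pi^{D_n/T})|$ cannot collapse to $2$ requires invoking the dichotomy of Theorem~\ref{min}(2) for the quotient map and pinning the octahedron down to the $6$-vertex (equivalently $n=6$) case, which is exactly where the hypothesis $n\ge 8$ enters. The accompanying upper-bound counting — that the fibres of $\pi$ are single $\kr(\pi)$-cosets and that at most two values survive modulo $r'$ — is the other point demanding care.
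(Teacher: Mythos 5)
Your proposal is correct, and for parts (2) and (3) it takes a genuinely different route from the paper. Part (1) is essentially the paper's argument: $N\le\kr(\pi)$ forces $N\le C_n\cap\kr(\pi)$, which has order $2$ because $\kr(\pi)$ is a dihedral subgroup of order $4$, whence $N=T$; normality of $L(T)$ gives the block system. For part (2), the paper does \emph{not} argue through the power function at all: it introduces the kernel $K$ of the action of $\aut(\M)$ on $D_n/T$, shows $K\cong\Z_2^i$ with $i\in\{1,2\}$ by letting $K$ act on the arcs between two adjacent blocks, and then runs a characteristic-subgroup argument to bound the core of $L(C_n/T)$ in $\aut(\M/T)$ by $2$, finally invoking Corollary~\ref{cor} to get $|\kr(\pi^{D_n/T})|\le 4$. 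Your alternative — combining Lemma~\ref{quo}(4) ($r\le 2r'$) and Lemma~\ref{quo}(5) with the fact from Lemma~\ref{JS-lem}(2) that the fibres of $\pi$ are single right cosets of $\kr(\pi)$, so that the preimage of $\kr(\pi^{D_n/T})$ has at most $2\cdot 4=8$ elements — is shorter, avoids the group-theoretic analysis of $K$ entirely, and is valid; note it even covers the degenerate case $r=r'$, where only the value $1$ survives. The lower bound via Theorem~\ref{min}(2) and the exclusion of the $6$-vertex octahedron is exactly the paper's step. For part (3) the paper reuses its core bound (core of order $2$ forces $2\mid m$; trivial core invokes Theorem~\ref{KMM}), whereas you deduce evenness of $m$ from the fact that $\kr(\pi^{D_n/T})$ is a dihedral subgroup of $D_m$ of order $4$, hence a Klein four-group, which cannot embed in $D_m$ for odd $m$; this is an equally sound and arguably more elementary conclusion from (2) together with Theorem~\ref{min}(1) applied to the quotient. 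The trade-off is that the paper's computation of the core in $\aut(\M/T)$ gives structural information that is in the spirit of its later induction, while your version keeps everything at the level of the power function.
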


\proof We set $G = \aut(\M)$ and $\Gamma = \cay(D_n,X)$.  The cases (1)-(3) are considered separately. \medskip

\noindent (1): Since $N \leq \kr(\pi)$, $|N|=2$ by  Theorem
\ref{min} and thus $T = N$. Since $N \trianglelefteq G,$ $T
\trianglelefteq G$ also holds, and (1) follows. \medskip

\noindent (2):  Let $K$ be the kernel of the action of $\aut(\M)$ on
$D_n/T$. Let $Y_1,Y_2$ be two $T$-cosets such that $(Y_1,Y_2)$ is an
arc of $\Gamma/T$. Let $S$ be the set of arcs from $Y_1$ to $Y_2$ in
$\Gamma$. It is easily seen that $K$ is regular on $S,$ hence $K
\cong \Z_2^i$ for  $i \in \{1,2\}$. Let $H \le C_n$ such that $T \le
H$ and $K L(H)/K$ is the core of $K L(C_n)/K$ in $G/K$. This implies
that $K L(H) \trianglelefteq G$. Define the subgroup $M = \langle
x^2 : x  \in K L(H) \rangle.$ Clearly, this is characteristic in $K
L(H)$. As $K \cong \Z_2^i, i \in \{1,2\}$ and $L(T) \le K,$ the
group $K L(H)$ is $L(H)$ or it can be written as a semidirect
product $L(H) \rtimes \Z_2$. This implies that $M=L(H^+),$ where
$H^+ < H$ and $|H : H^+|=2$.   Since $L(H^+)$ is characteristic in
$K L(H)$ and $K L(H)$ is normal in $G$, $L(H^+)$ is normal in $G$.
Thus $L(H^+) \le L(T),$ $|H| \le 4,$ and so $|\core_{G/K}(K
L(C_n)/K)| \le 2$. Using also that $G^{D_n/T} \cong G/K$ and $K
L(C_n)/K \cong L(C_n)^{D_n/T},$ we conclude that the core of
$L(C_n)^{D_n/T}$  in $G^{C_n/T}$ has order at most $2$.
 Since $\aut(\M/T) = G^{D_n/T}$ and
$L(C_n/T) = L(C_n)^{D_n/T},$ this together with Corollary~\ref{cor}
imply that $|\ker(\pi^{D_n/T})| \le 4$. On the other hand, applying
Theorem~\ref{min} to $\M/T,$ we find that $|\ker(\pi^{D_n/T})| \ge
4$ since $n \ge 8$. Hence $|\ker(\pi^{D_n/T})| = 4$.

\medskip

(3):  Let $K$ be the kernel of the action of $\aut(\M)$ on $D_n/T$.
We have shown in the proof of (2) that $|\core_{G/K}(K L(C_n)/K)|
\le 2$.  If equality holds, then $m=|K L(C_n)/K|$ is even. If
$|\core_{G/K}(K L(C_n)/K)|=1,$ then Theorem~\ref{KMM} gives that $m$
is even because $n \ge 8$. The lemma is proved. \QED

\begin{lem} \label{lem2}
Let $n \ge 8$ and $|\kr(\pi)|=4$. Now the following hold:
\begin{enumerate}[(1)]
\item $X = a \langle a^2 \rangle \, \cup \, b \langle a^2 \rangle,$ and $\psi$ switches
the sets
$a \langle a^2 \rangle$  and $b \langle a^2 \rangle$.
\item Let $\Pi = \{ \pi(g) : g \in D_n\}$.
Then $\Pi = \{2i+1 : i \in \{0,1,\ldots,m-1\} \}$.
\item  For any $g \in D_n$ and for any $a^k \in X$,
$\pi(ga^k) =  \pi(g) + \pi(a^k) -1 (\text{mod } n)$.
\item If $4 \mid n,$ then
$\pi(x) \equiv -1(\text{mod }4)$ if and only if
$x \in a \langle a^2 \rangle \, \cup \, b \langle a^2 \rangle$.
\end{enumerate}
\end{lem}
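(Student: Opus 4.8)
The plan is to argue by induction on $n$, splitting according to whether the core $N$ is trivial, and to read off all four assertions from the resulting structure. I first record the rigidity that drives everything. Since $n\ge 8$ we are not in the octahedron case of Theorem~\ref{min}, so $|\kr(\pi)|=4$ forces $\kr(\pi)$ to be dihedral of order $4$, i.e. $\kr(\pi)\cong\Z_2^2$. Every Klein four-subgroup of $D_n$ (with $n=2m$ even) contains two reflections whose product is the unique central involution $a^m$; hence $a^m\in\kr(\pi)$, $C_n\cap\kr(\pi)=T$, and $\kr(\pi)=\langle a^m\rangle\times\langle r\rangle$ for a reflection $r$. By Lemma~\ref{JS-lem}(2), $\pi$ is constant on, and separates, the right cosets of $\kr(\pi)$, so $\pi$ takes exactly $[D_n:\kr(\pi)]=m$ distinct values. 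These two facts — the membership $a^m\in\kr(\pi)$ and the count $|\Pi|=m$ — are what make (2)--(4) go through.

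If $N$ is trivial, then $n\ge 8\ge 5$ and Theorem~\ref{c-free} applies: $\M$ is equivalent to the map of Theorem~\ref{main}(4) (with $m$ odd), and Lemma~\ref{map1} yields (1)--(4) directly (after composing the defining isomorphism with the automorphism $b\mapsto ab$ if necessary, we may normalize $X=a\langle a^2\rangle\cup b\langle a^2\rangle$). So assume $N\ne 1$. By Lemma~\ref{lem1}, $N=T$, $m$ is even, $D_n/T$ is a block system, and the quotient $\M/T$ is a regular Cayley map on $D_n/T\cong D_m$ with $|\kr(\pi^{D_n/T})|=4$. The inductive hypothesis supplies the conclusion of the present lemma for $\M/T$: $X/T=\bar a\langle\bar a^2\rangle\cup\bar b\langle\bar a^2\rangle$, $\psi^{D_n/T}$ switches the two halves, $\Pi^{D_n/T}=\{1,3,\dots,m-1\}$, and $|X/T|=m$.

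I then lift through the covering $D_n\to D_n/T$. Because $L(T)\trianglelefteq\aut(\M)$, the skew-morphism $\psi$ commutes with $L_{a^m}$ and fixes $a^m$; since the projection intertwines $\psi$ with $\psi^{D_n/T}$ and the latter is transitive on $X/T$, all fibres of $X\to X/T$ have a common size, $1$ or $2$. Fibre size $1$ would give $|X^{D_n/T}|=|X|$ and, via Lemma~\ref{quo}(5), $\pi=\pi^{D_n/T}\circ(\mathrm{proj})$, forcing $|\kr(\pi)|=|T|\cdot|\kr(\pi^{D_n/T})|=8$, a contradiction. Hence the fibres have size $2$, $X$ is a union of $T$-cosets, and (as $m$ is even) $X$ is the full preimage of $X/T$, namely $a\langle a^2\rangle\cup b\langle a^2\rangle$, with $\psi$ switching the halves; this is (1). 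For (2) I combine $|\Pi|=m$ with Lemma~\ref{quo}(5): $\pi(g)\equiv\pi^{D_n/T}(Tg)\pmod m$ is odd while $m$ is even, so every $\pi(g)$ is odd and lies in $\{1,3,\dots,2m-1\}$; since this set has exactly $m$ elements, equality follows. Assertions (3) and (4) come from the same reduction modulo $m$ together with the cocycle identity Lemma~\ref{JS-lem}(3), exactly as in the computation in Lemma~\ref{map1}(4) (when $8\mid n$ the congruence mod $m$ already refines to mod $4$).

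The main obstacle is the non-trivial-core case, and within it the genericity of the quotient tower, which cannot be imported from the inductive hypothesis at the base parameters $m\in\{4,6\}$ (i.e. $n\in\{8,12\}$, the only Case-B instances below the range $n\ge 8$, where $\M/T$ lives on $D_4$ or $D_6$). One must rule out that $\M/T$ is one of the sporadic maps of Theorem~\ref{main}(2),(3): the fibre argument still forces $X$ to be a union of $T$-cosets with $|X|=2|X/T|$, so a sporadic quotient would make $X$ a proper subset of $a\langle a^2\rangle\cup b\langle a^2\rangle$ (too few reflections) and would make $\psi$ carry some rotation to a rotation inside a single orbit, violating the switching property. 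Showing this is incompatible with $\kr(\pi)\cong\Z_2^2$ containing both $a^m$ and a reflection — equivalently, verifying the two base cases on $D_4$ and $D_6$ by hand — is where the genuine work lies, in contrast to the routine congruence transfers that give (2)--(4).
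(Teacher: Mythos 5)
Your overall skeleton matches the paper's: induction on $n$, split on whether the core $N$ is trivial (in which case Theorem~\ref{c-free} and Lemma~\ref{map1} finish), and otherwise pass to the quotient $\M/T$ via Lemma~\ref{lem1}. Your fibre argument for (1) — fibre size $1$ forces $|\langle\psi\rangle|=|\langle\psi^{D_n/T}\rangle|$, hence $\kr(\pi)$ is the full preimage of $\kr(\pi^{D_n/T})$ and has order $8$ — is a clean alternative to the paper's counting of $|\Pi|$ against $|\langle\psi\rangle|$, and your treatment of (2) is essentially the paper's. But there are two genuine gaps. The serious one is in (4). Your argument lifts the mod-$4$ information from $\M/T$ through the congruence $\pi(x)\equiv\pi^{D_n/T}(Tx)\ (\mathrm{mod}\ m)$, which only determines $\pi(x)$ mod $4$ when $4\mid m$, i.e.\ when $8\mid n$. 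The hypothesis of (4) is only $4\mid n$, and when $N$ is non-trivial Lemma~\ref{lem1}(3) guarantees $m$ even but not $4\mid m$. The case $m\equiv 2\ (\mathrm{mod}\ 4)$ is exactly the case where the core of $L(C_n/T)$ is trivial in $\aut(\M/T)$ and Theorem~\ref{c-free} applies to the quotient; there the induction hypothesis carries no mod-$4$ content, and the paper must do real work: normalize $\psi(a^{-1})=b$, rule out $\psi(b)=a^{m+1}$ by deriving $\pi(g)\equiv\pi(ga)\ (\mathrm{mod}\ 4)$ in contradiction with (2), then compute $\pi(a)=3$, $\pi(a^2)=5$ and propagate using (3). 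Your proposal does not address this case at all.

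The second gap is in (3). Reduction modulo $m$ cannot give a congruence modulo $n=2m$: two odd numbers congruent mod the even number $m$ need not be congruent mod $2m$. What is actually needed is the identity $\pi(x)+\pi(\psi(x))\equiv 2\ (\mathrm{mod}\ n)$ for $x$ in the reflection half of $X$, which follows from part (1) together with \eqref{eq1} (both $x$ and $\psi^2(x)$ are involutions, so $\chi(x)=\chi(\psi^2(x))=0$ and the two contributions of $\chi(\psi(x))$ cancel); only then does the sum in Lemma~\ref{JS-lem}(3) telescope in pairs, using that $\pi(g)$ is odd by (2). Invoking ``the computation in Lemma~\ref{map1}(4)'' does not substitute for this, since that computation uses the explicit $\chi$-values of one particular skew-morphism. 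Finally, your base cases are deferred rather than done: the induction needs the lemma (and, for your route, the full Theorem~\ref{main} statement on $D_4$ and $D_6$ to exclude sporadic quotients) verified for $n=8$ and $n=12$; the paper settles this, and indeed all $n\le 16$, by appeal to Conder's catalogue \cite{C09}, and your sketched structural argument against sporadic quotients is not a proof as it stands.
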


\proof We prove the lemma by induction on $n$. If $n \le 16,$  all
statements can be checked directly, using the catalog of small
regular maps in~\cite{C09}. Therefore assume that $n > 16,$ and that
the lemma holds for any $n'$ such that $8 \le  n' < n$.  We consider
all cases (1)-(4) step by step.
\medskip

\noindent (1): If $N$ is trivial, then (1) follows from
Theorems~\ref{KMM} and \ref{c-free}.

Let $N$ be non-trivial. Lemma~\ref{lem1} together with the induction
hypothesis imply that $X/T = (a \langle a^2 \rangle \, \cup \, b
\langle a^2 \rangle)/T,$ and $\psi^{D_n/T}$ switches the sets $(a
\langle a^2 \rangle)/T$  and $(b \langle a^2 \rangle)/T$. Notice
that for every $x \in a \langle a^2 \rangle \, \cup \, b \langle a^2
\rangle,$ $|Tx \cap X|$ is the same positive constant, say $c$,
which does not depend on $x$. If  $c=1,$ then $|X| = n/2$. Let us
consider  the action of $\aut(\M)$ on the set of right
$L(D_n)$-cosets by right multiplication. This has degree $|\langle
\psi \rangle|$. For any $d \in D_n,$ $L(D_n) \psi L_d = L(D_n)
\psi^{\pi(d)}$. This shows that the orbit of the coset $L(D_n) \psi$
under $L(D_n)$ is equal to $\{ L(D_n) \psi^{\pi(d)} : d \in D_n\},$
and thus it has size $|\Pi|$. Clearly, $L(D_n),$ as a coset, is
fixed by every permutation in $L(D_n)$. All these imply that $|\Pi|
< |\langle \psi \rangle|$. Since $|\kr(\pi)|=4,$ $|\Pi|=n/2,$ and we
may write
$$
\frac{n}{2} = |\Pi| <  |\langle \psi \rangle| = |X| = \frac{n}{2}.
$$
This is a contradiction, and so $c=2,$ $X = a \langle a^2 \rangle \, \cup \, b \langle a^2 \rangle$.
Since $\psi^{D_n/T}$
switches the sets
$(a \langle a^2 \rangle)/T$  and $(b \langle a^2 \rangle)/T$, $\psi$ also switches the sets
$a \langle a^2 \rangle$  and $b \langle a^2 \rangle$.
\medskip

\noindent (2):  If $N$ is trivial, then (2) follows from
Theorem~\ref{c-free} and Lemma~\ref{map1}(4).

Let $N$ be non-trivial. Lemma~\ref{lem1} together with the induction
hypothesis imply that, for any  $Tg \in D_n/T$, $\pi^{D_n/T}(Tg)$ is
odd. Notice that Lemma~\ref{lem1}(3) gives that $4
\mid n,$ and hence $\psi^{D_n/T}$ is of even order. Now, by
Lemma~\ref{quo}(5), for any $g \in D_n$, $\pi(g)$ is also odd, which
implies that $\Pi = \{2i+1 : i \in \{0,1,\ldots,m-1\} \}$.
\medskip

\noindent (3): Since $\psi$ switches the sets
$a \langle a^2 \rangle$  and $b \langle a^2 \rangle$, for any
$x \in b \langle a^2 \rangle$, we obtain by the formula in \eqref{eq1}
that
$$\pi(x)+\pi(\psi(x)) \equiv 2 (\text{mod }n).$$
Using this property and Lemma~\ref{JS-lem}(3), one can say that
for any $g \in D_n$ and $a^k \in X$,
\begin{eqnarray*} \pi(ga^k) & \equiv & \sum_{i=0}^{\pi(g)-1} \pi(\psi^i(a^k)) \\
&\equiv& \pi(a^k)+\left(\pi(\psi(a^k))+\pi(\psi^2(a^k))\right)+\cdots+\left(\pi(\psi^{\pi(g)-2}(a^k))+\pi(\psi^{\pi(g)-1}(a^k))\right) \\
&\equiv& \pi(a^k) + \pi(g)-1 (\text{mod } n) \end{eqnarray*}
because $\pi(g)$ is odd by case (2).
\medskip

\noindent (4):  If $N$ is trivial, then (4) follows from
Theorem~\ref{c-free} and Lemma~\ref{map1}(4).

Let $N$ be non-trivial. Assume at first that $\M / T$ is a regular
Cayley map on $D_n/T$  such that the core of $L(C_n/T)$ is trivial
in $\aut(\M/T)$. Now $\frac{n}{2}=m=2m'$ with odd $m'$ by
Theorem~\ref{c-free}. Since for any $a^kb \in D_n \setminus C_n$,
there exists a group automorphism $\phi$ such that $\phi(a^kb)=b$,
we can assume that $\psi(a^{-1})=b$. Now $\psi(b)=a$ or
$\psi(b)=a^{m+1}$. Suppose that $\psi(b)=a^{m+1}$.
Since $a^m \in N$ and $N$ is normalized by $\psi,$ $a^m \in
\kr(\pi)$ and $\psi(a^m)=a^m$. Then $\psi(a^{m-1})=a^mb$ and
$\psi(a^mb)=a$. These imply that
$$
\pi(a^{-1})=\pi(a^{m-1})=\pi(b) = \pi(a^mb)=m-1 \ \ \mbox{and} \ \ \pi(a)=\pi(a^{m+1})=m+3.
$$
By (3) we find $\pi(g a) \equiv \pi(g)+m+2(\text{mod
}n)$ for every $g \in D_n$. Since $n\equiv 0(\text{mod }4)$ and
$m\equiv 2(\text{mod }4)$, $\pi(g)
\equiv \pi(ga)(\text{mod }4)$ for every $g \in D_n$. This, however,
contradicts (2).
Therefore, $\psi(b)=a,$ and hence
$$\pi(a^{-1})=\pi(a^{m-1})=\pi(b) = \pi(a^mb)=-1 \ \ \mbox{and} \ \ \pi(a)=\pi(a^{m+1})=3.$$
These imply that $\pi(a^2)= 2\pi(a)-1 = 5$. For any $x \in a \langle a^2 \rangle$, $\pi(a^{2}x)=\pi(x)+\pi(a^2)-1 = \pi(x)+4$.
Hence $\{ 3, 7, \ldots, n-1\} \subset \{\pi(x) : x \in X \}$. Since $X = a \langle a^2 \rangle \, \cup \, b \langle a^2 \rangle $ is a union of
some right cosets  of $\kr(\pi) = \{1,a^m,ba,ba^{m+1}\}$, $|\{\pi(x) : x \in X \}| = \frac{|X|}{4} = \frac{n}{4}$. Hence $\{\pi(x) : x \in X \} = \{ 3, 7, \ldots, n-1\}$.

For the remaining case, let $\M / T$ be a regular Cayley map on
$D_n/T$  such that the core of $L(C_n/T)$ is non-trivial in
$\aut(\M/T)$. Thus $n/2$ is divisible by $4,$ see
Lemma~\ref{lem1}(3). On the other hand,  the order of $\psi^{D_n/T}$
is equal to $|X/T|=n/2,$ and hence $4 \mid |\langle \psi^{D_n/T}
\rangle|$. By the induction hypothesis, $\{\pi^{D_n/T}(Tx) : Tx \in
X/T \} = \{ 3, 7, \ldots, \frac{n}{2}-1\}$. Note that for any $Tx
\in X/T$, $\pi^{D_n/T}(Tx) \equiv -1 (\text{mod } 4)$. By
Lemma~\ref{quo}(5) and the fact that $4 \mid |\langle \psi^{D_n/T}
\rangle|$, for any $x \in X$, $\pi(x)\equiv -1 (\text{mod } 4)$.
Since $X$ is a union of some right cosets of $\kr(\pi)$, $\{\pi(x) :
x \in X \} = \{ 3, 7, \ldots, n-1\}$. \QED

\bigskip

\noindent{\bf Proof of Theorem~\ref{main}.} Let $\M$
be the regular Cayley map given in the theorem. If $n \le 6,$ then
one can check, using the catalog of small regular maps~\cite{C09}, that $\M$ is given
in one of cases (1)-(3) in Theorem~\ref{main}. Let $n > 6$.

If $n$ is odd, then it follows from \cite[Theorem~3.2]{KMM13} that
the core of $L(C_n)$ in $\aut(\M)$ is equal to either $L(C_n),$ or
the subgroup of $L(C_n)$ of index $3$. Since $n > 6,$ the core is
non-trivial of odd order, implying that $|\kr(\pi)| \ne 4$, which is
a contradiction. Thus $n=2m$ and $n \ge 8$. We follow the notations
set at the beginning of this section. We proceed by induction on
$n$. If  $n\le 14,$  then the statement can be checked directly,
using the catalog of small regular maps~\cite{C09}. Therefore,
assume that $n \ge 16$ and that the lemma holds for any $n'$ such
that $8 \le n' < n$.
\medskip

We set $G = \aut(\M),$ $\Gamma = \cay(D_n,X),$ and $\Pi = \{\pi(g) :
g \in D_n\}$. Because of Theorem~\ref{c-free}, we may assume that
$N$ is non-trivial. By Lemma~\ref{lem2}(2), $\Pi = \{ 2i+1 : i \in
\{0,1,\ldots,m-1\}$. Now $D_n/T$ is a block system for $G,$ and
$|\kr(\pi^{D_n/T})| = 4$. Since $n \ge 16,$ $m$ is even by
Lemma~\ref{lem1}(3). The induction hypothesis gives that, up to
equivalence of $\M,$ we may write $p^{D_n/T} = (T b,T a,T
a^2b,\ldots, T a^{m-1})$ or $p^{D_n/T} = (T b,T a,T
a^{2+m/2}b,\ldots, T a^{m-1})$. By Lemma~\ref{lem2}(4), there exists
$x \in X$ such that $\pi(x)=n-1$. This implies that $\pi(a^{k})=n-1$
for some $a^k \in X$. Thus $\pi(T a^k) = m-1,$ see
Lemma~\ref{quo}(5). If $m=2m'$ with odd $m'$, then $k$ equals $m-1$
or $n-1$. If $m$ is a multiple of $4$, then  $k \in
\{\frac{1}{2}m-1,m-1,\frac{3}{2}m-1,n-1\}$. In either cases,  there
exists a group automorphism $\phi$ such that $\phi(a^k)=a^{-1}$,
$\phi(\psi(a^k))=b$ and $\phi(\psi^2(a^k))= a$ since  $k$ and $n$
are relative prime and $\psi^2(a^k)=a^{-k}$. Therefore, we can
assume (up to equivalence of $\M$) that
$$
\psi(a^{-1}) = b \text{ and } \psi(b) = a.
$$

\noindent{\bf Case 1.} $p^{D_n/T} = (T b,T a,T a^2b,T a^3,\ldots,T a^{m-2}b,T a^{m-1})$. \medskip

Clearly, $p(a) \in \{a^2b,a^{m+2}b\}$ and $p^2(a) \in \{a^3,a^{m+3}\}$.
Below we go through all possibilities. \medskip

\noindent {\bf Subcase 1.1.}  $p(a) = a^2b$ and $p(a^2b)=a^3$. Then
$\psi(ab) = \psi(a) \psi^{\pi(a)}(b) = a^2b \psi^3(b) = a^2b a^3 =
a^{-1}b$. Thus $a^{-1}b = \psi(ab) = \psi(ba^{-1}) = \psi(b)
\psi^{\pi(b)}(a^{-1}) = a \psi^{-1}(a^{-1}),$ which implies that
$\psi^{-1}(a^{-1}) = a^{-2}b,$ and so $\psi(a^{-2}b) = a^{-1}$.
 Now we have $a^{-1} = \psi(a^{-2} b) = \psi(b a^2) = \psi(b)
\psi^{\pi(b)}(a^2) = a \psi^{-1}(a^2),$ and $a^3 = \psi(a^2 b) =
\psi(b a^{-2}) = \psi(b) \psi^{\pi(b)}(a^{-2}) = a
\psi^{-1}(a^{-2})$. Consequently, $\psi$ switches $a^2$ and
$a^{n-2}$. By this one can prove that $\psi(a^{2i}) = a^{-2i}$ for
every $i \in \{1,\ldots,m\}$. Since $\Pi$ contains only odd numbers,
we find all the remaining values as:
\begin{eqnarray*}
\psi(a^{2i+1} b) &=& \psi(ab a^{-2i}) = \psi(ab) \psi^{\pi(ab)}(a^{-2i}) = a^{-1}b a^{2i} = a^{n-2i-1} b \\
\psi(a^{2i+1})    &=& \psi(a) \psi^{\pi(a)}(a^{2i}) = a^2 b a^{-2i} = a^{2i+2} b \\
\psi(a^{2i}b)      &=& \psi(b a^{-2i}) = \psi(b)\psi^{\pi(b)}\psi(a^{-2i}) = a a^{2i} = a^{2i+1}
\end{eqnarray*}
Thus $\psi$ is the skew-morphism given in Lemma~\ref{map1}(1), and case
(4) follows. \medskip

\noindent{\bf Subcase 1.2.} $p(a) = a^2b$ and $p^2(a)=a^{m+3}$. Then
$\psi(ab) = \psi(a) \psi^{\pi(a)}(b) = a^2b \psi^3(b) = a^2b a^{m+3}
= a^{m-1}b$. Thus $a^{m-1}b = \psi(ab) = \psi(ba^{-1}) = \psi(b)
\psi^{\pi(b)}(a^{-1}) = a \psi^{-1}(a^{-1}),$ which implies that
$\psi^{-1}(a^{-1}) = a^{m-2}b,$ and so $\psi(a^{-2}b) = a^{m-1}$.
Then we may write $a^{m-1} = \psi(a^{-2} b) = \psi(b a^2) = \psi(b)
\psi^{\pi(b)}(a^2) = a \psi^{-1}(a^2)$ and $a^{m+3} = \psi(a^2 b) =
\psi(b a^{-2}) = \psi(b) \psi^{\pi(b)}(a^{-2}) = a
\psi^{-1}(a^{-2})$. Consequently, $\psi$ switches $a^2$ and
$a^{m-2},$ and also  $a^{-2}$ and $a^{m+2}$. Now $a^2 b = \psi(a) =
\psi(a^{-1} a^2) = \psi(a^{-1}) \psi^{\pi(a^{-1})}(a^2) = b
a^{m-2},$ a contradiction. \medskip

\noindent{\bf Subcase 1.3.} $p(a) = a^{m+2}b$ and $p^2(a)=a^3$. Then
$\psi(ab) = \psi(a) \psi^{\pi(a)}(b) = a^{m+2}b \psi^3(b) = a^{m+2}b
a^3 = a^{m-1}b$. Thus $a^{m-1}b = \psi(ab) = \psi(ba^{-1}) = \psi(b)
\psi^{\pi(b)}(a^{-1}) = a \psi^{-1}(a^{-1}),$ which means
$\psi^{-1}(a^{-1}) = a^{m-2}b,$ and so $\psi(a^{m-2}b) = a^{-1}$.
Now $\psi(a^2) = \psi(a^{-1}
a^3)=\psi(a^{-1})\psi^{\pi(a^{-1})}(a^3) = b a^{m+2}b = a^{m-2}$,
which implies $\psi(a^{m+2}) = a^{-2}$. Also, we have $\psi(a^{-2})
= \psi(a^{-1} a^{-1})=\psi(a^{-1})\psi^{\pi(a^{-1})}(a^{-1}) = b
a^{m-2}b = a^{m+2}$, and hence $\psi(a^{m-2}) = a^2$. Furthermore,
$\psi(a^4) = \psi(a^2)\psi^{\pi(a^2)}(a^2) = a^{m-2} a^{m-2} =
a^{-4};$ and similarly, $\psi(a^{-4}) = a^4$. By these one can prove
that $\psi(a^{4i+2}) = a^{-4i-2+m}$ and $\psi(a^{4i}) = a^{-4i}$ for
every $i \in \{1,\ldots,m/2-1\}$. Now for $\psi$ to be well defined,
$m$ should be a multiple of $4$, and hence $8 \mid n$. Using these
values of $\psi$ and the fact that $\Pi$ contains only odd numbers,
we find all the remaining values as:
\begin{eqnarray*}
\psi(a^{2i+1} b) &=& \psi(ab a^{-2i}) = \psi(ab) \psi^{\pi(ab)}(a^{-2i}) = a^{m-1}b a^{im+2i} = a^{(i+1)m-2i-1} b \\
\psi(a^{2i+1})    &=& \psi(a) \psi^{\pi(a)}(a^{2i}) = a^{m+2} b a^{im-2i} = a^{(i+1)m+2i+2} b \\
\psi(a^{2i}b)      &=& \psi(b a^{-2i}) = \psi(b)\psi^{\pi(b)}\psi(a^{-2i}) = a a^{im+2i} = a^{im+2i+1}
\end{eqnarray*}
Thus $\psi$ is the skew-morphism given in Lemma~\ref{map2}(1), and case (5)
follows.
\medskip

\noindent{\bf Subcase 1.4.} $p(a) = a^{m+2}b$ and $p^2(a)=a^{m+3}$.
Then $\psi(ab) = \psi(a) \psi^{\pi(a)}(b) = a^{m+2}b \psi^3(b) =
a^{m+2}b a^{m+3} = a^{-1}b$. Thus $a^{-1}b = \psi(ab) =
\psi(ba^{-1}) = \psi(b) \psi^{\pi(b)}(a^{-1}) = a
\psi^{-1}(a^{-1}),$ which means $\psi^{-1}(a^{-1}) = a^{-2}b,$ and
so $\psi(a^{-2}b) = a^{-1}$. Now we have $a^{-1} = \psi(a^{-2} b) =
\psi(b a^2) = \psi(b) \psi^{\pi(b)}(a^2) = a \psi^{-1}(a^2)$ and
$a^{m+3} = \psi(a^{m+2} b) = \psi(b a^{m-2}) = \psi(b)
\psi^{\pi(b)}(a^{m-2}) = a \psi^{-1}(a^{m-2})$. Consequently, $\psi$
switches $a^2$ and $a^{-2},$ and also  $a^{m+2}$ and $a^{m-2}$. Now,
$a^{m+2} b = \psi(a) =  \psi(a^{-1} a^2) = \psi(a^{-1})
\psi^{\pi(a^{-1})}(a^2) = b a^{-2},$ a contradiction. \medskip

\noindent{\bf Case 2.}  $p^{D_n/T} = (T b,T a,T a^{2+m/2}b,\ldots, T a^{m-1})$. \medskip

Now $\psi^{-1}(a^{-1}) \in \{a^{\frac{m}{2}-2}b,a^{\frac{3m}{2}-2}b \}$ and $\pi(\psi^{-1}(a^{-1}))= 3$.
By Lemma~\ref{lem2}(3),
$$\pi(\psi^{-1}(a^{-1}) a^{-1}) \equiv \pi(\psi^{-1}(a^{-1}))+ \pi(a^{-1})-1 \equiv 1 (\text{mod } n),$$
which implies that $\psi^{-1}(a^{-1}) a^{-1} \in \kr(\pi)$. Notice that $\psi^{-1}(a^{-1}) a^{-1} = a^{\frac{m}{2}-1}b$ or $a^{\frac{3m}{2}-1}b$.
Since $\pi(a^{-1}b)=\pi(ba) \equiv \pi(b)+\pi(a)-1 \equiv 1 (\text{mod } n)$, $a^{-1}b$ also belongs to $\kr(\pi)$.  These imply that $a^{\frac{m}{2}} \in \kr(\pi)$, and hence $|\kr(\pi)| \ge 8$, a contradiction.  \QED


\end{document}